\documentclass[11pt]{article}
\usepackage{lmodern}
\usepackage{lmodern}
\usepackage[T1]{fontenc}
\usepackage[utf8]{inputenc}
\usepackage{color}
\usepackage{colortbl}
\usepackage{booktabs}
\usepackage{mathtools}
\usepackage{amsmath}
\usepackage{amsthm}
\usepackage{amssymb}
\usepackage{geometry}
\usepackage{microtype}
\usepackage[bookmarks=false,
 breaklinks=false,pdfborder={0 0 1},backref=false,colorlinks=true]
 {hyperref}
\hypersetup{pdftitle={An explicit dimension-free isoperimetric plateau in the cube},
 linkcolor=blue!55!black,citecolor=blue!55!black,urlcolor=blue!55!black}

\makeatletter

\providecommand{\tabularnewline}{\\}

\usepackage{amsthm}\usepackage{xcolor}
\newtheorem{theorem}{Theorem}\newtheorem{lemma}[theorem]{Lemma}\newtheorem{remark}[theorem]{Remark}\newcommand{\E}{\mathbb E}

\newcommand{\Per}{\operatorname{Per}}
\newcommand{\IG}{I_{\mathrm G}}
\newcommand{\dd}{\,d}
\newcommand{\symdiff}{\mathbin{\triangle}}
\newcommand{\R}{\mathbb R}
\newcommand{\epszero}{\varepsilon_0}

\theoremstyle{plain}
\newtheorem{conjecture}{\protect\conjecturename}\providecommand{\conjecturename}{Conjecture}
\providecommand{\theoremname}{Theorem}

\makeatother

\providecommand{\conjecturename}{Conjecture}
\providecommand{\theoremname}{Theorem}

\begin{document}
\title{An explicit dimension-free isoperimetric plateau in the cube }
\author{Lei Yu\thanks{L. Yu is with the School of Statistics and Data Science, LPMC, KLMDASR,
and LEBPS, Nankai University, Tianjin 300071, China (e-mail: leiyu@nankai.edu.cn).} }
\date{13 October 2026}

\maketitle
 
\begin{abstract}
We prove that, in every dimension, the relative Euclidean isoperimetric
profile of the unit cube equals one whenever the prescribed volume
differs from one half by at most $6.25\times10^{-12}$. The only minimizers
in this interval are coordinate slabs. This answers Glaudo's question on the existence of a dimension-free central plateau. The proof combines a dimension-free
local optimality estimate for slabs, established through BV slicing
and a small-support estimate, with a quantitative Gaussian rigidity
argument that forces the normal of an approximating half-space to
be close to a coordinate direction.
\end{abstract}
\noindent\textbf{Keywords.} Relative isoperimetry; cube; dimension-free
stability; BV functions; Gaussian isoperimetry.\\
 \textbf{2020 Mathematics Subject Classification.} 49Q20, 26B30, 60E15.

\section{Introduction}

For the open unit cube $Q_{n}=(0,1)^{n}$, endowed with Lebesgue probability
measure, define 
\[
P_{n}(E)=\Per(E;Q_{n}),\qquad I_{n}(a)=\inf\{P_{n}(E):E\subset Q_{n},\ |E|=a\},
\]
where $|E|$ is the volume of $E$ and the perimeter $\Per(E;Q_{n})$
is relative to the open cube: portions of the boundary on its faces
are not counted. Define $I_{\infty}=\inf_{n\ge1}I_{n}=\lim_{n\to\infty}I_{n}$.
Sets are considered up to null sets. Coordinate slabs of any volume
$a\in(0,1)$ have perimeter one, so $I_{n}(a)\le1$. The question
is whether they minimize on a common interval about half-volume as
$n$ varies.

Glaudo \cite{Glaudo2024} proves a central plateau in each fixed dimension
and credits it to the earlier variational work of Acerbi, Fusco, and
Morini \cite{AcerbiFuscoMorini2013}. This result was part of an open
problem stated by Brezis and Bruckstein \cite[Open Problem 10.1]{Brezis2023}
(see also \cite[Remark 2]{BrezisBruckstein2021}). Glaudo \cite{Glaudo2024}
explicitly formulates the dimension-uniform version of this question
as Open Question~1.1 in \cite{Glaudo2024}. He also proves that the
limiting cube profile stays strictly above the elementary Gaussian
lower bound away from half-volume.

We establish the dimension-uniform central statement with the explicit
constant. 

\begin{theorem}[Central plateau with explicit width]\label{plateau-1}
\begin{equation}
I_{n}(a)=1\quad\text{for}\quad|a-1/2|\le\epszero,\qquad\epszero=\frac{1}{160\,000\,000\,000}=6.25\times10^{-12}.\label{main-intro}
\end{equation}
The only minimizers are slabs attached to cube faces. \end{theorem}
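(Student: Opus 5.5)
We argue by contradiction and compactness-free quantitative estimates, following the two ingredients announced in the abstract. Suppose $E\subset Q_n$ has $|E|=a$ with $|a-1/2|\le\epszero$ and $P_n(E)\le 1$. We aim to show that $E$ is a coordinate slab. The plan has two regimes, according to whether $E$ is already $L^1$-close to some coordinate slab $S=\{x_i<a\}$ (or $\{x_i>1-a\}$), with a closeness threshold $\delta$ independent of $n$.

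\emph{Far regime (Gaussian rigidity).} First we transport to Gauss space. Use the coordinatewise map $\Phi:\R^n\to Q_n$ with $\Phi(y)_j$ equal to the standard normal CDF of $y_j$. Its Lipschitz constant is $\phi(0)=(2\pi)^{-1/2}$ in each coordinate, so it maps the Gaussian measure to Lebesgue measure on $Q_n$. Consequently $F=\Phi^{-1}(E)$ satisfies $\Per_\gamma(F)\le (2\pi)^{-1/2}P_n(E)\le(2\pi)^{-1/2}$. Since $\IG(1/2)=(2\pi)^{-1/2}$ and $\IG$ is flat to second order at $1/2$, the Gaussian deficit of $F$ is at most $O(\epszero^2)$ plus the slack of $P_n(E)$ below $1$. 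By a dimension-free quantitative Gaussian isoperimetric inequality (Mossel--Neeman, or Barchiesi--Brancolini--Julin), $F$ is then $\gamma$-close to a half-space $H=\{y\cdot\nu<t\}$.

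The key step is to show that $\nu$ is close to a coordinate vector $e_i$. Equality in the comparison $\Per_\gamma(F)\le(2\pi)^{-1/2}P_n(E)$ is lost wherever the normal is spread over coordinates: the exact Jacobian weight is $\big(\sum_j\phi(y_j)^2\nu_j^2\big)^{1/2}/\phi(0)$ rather than $1$. For $F$ near $H$, the map $\Phi$ compresses $\partial H$ by a factor bounded away from $1$ unless $\max_j|\nu_j|$ is near $1$. Quantifying this, using the measure of $\partial H$ near the origin and the size of the Gaussian deficit, forces $|\nu-e_i|$ small. It also forces $t$ small, since $a$ is near $1/2$. Pulling back, $E$ is $L^1$-close to the slab $S$, so we are in the near regime. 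I expect this rigidity estimate, made explicit with numerical constants, to be the main obstacle. What it demands is a stability version of the slicing inequality that controls $L^1(\gamma)$ distance by perimeter deficit with constants not depending on $n$.

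\emph{Near regime (local optimality of slabs).} Now $|E\symdiff S|\le\delta$ with $S=\{x_1<a\}$. Slice in the $x_1$ direction. For a.e.\ $x'\in Q_{n-1}$ the slice $E_{x'}\subset(0,1)$ has one-dimensional perimeter at least $1$ unless it is empty or full. By the BV slicing inequality, $P_n(E)\ge\int_{Q_{n-1}}\Per(E_{x'};(0,1))\dd x'$. Hence $P_n(E)\ge 1-|Z|$, where $Z$ is the set of $x'$ whose slice is trivial, and the transverse part of $D\chi_E$ adds back a term controlled by the perimeter of $Z$ in $Q_{n-1}$. The volume constraint together with $|E\symdiff S|\le\delta$ makes $|Z|$ small. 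The small-support estimate, namely the relative isoperimetric inequality in $Q_{n-1}$ for sets of small measure, $\Per(Z;Q_{n-1})\ge c|Z|\log(1/|Z|)^{1/2}$ or at least $\ge c|Z|$ with $c>1$ for $|Z|$ small (dimension-free via the Gaussian profile), then shows that the transverse perimeter strictly exceeds $|Z|$ unless $Z$ is null. With $Z$ null every slice is a nontrivial set, and $P_n(E)\le1$ forces equality in slicing. So every slice is a single interval attached to one endpoint and the transverse derivative vanishes. Therefore $E$ is a slab $\{x_1<a\}$ or $\{x_1>1-a\}$.

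Combining the two regimes gives $I_n(a)=1$ on the stated interval, with only face-attached slabs as minimizers. The value $\epszero=6.25\times10^{-12}$ comes from chaining the explicit constants: the Gaussian stability constant fixes how small the deficit must be to reach the threshold $\delta$ required by the near-regime argument.
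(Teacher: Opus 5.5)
Your architecture is the paper's: rigidity places every competitor with $P_n(E)\le1$ in a fixed neighbourhood of a slab, and local optimality takes over there. However, the near-regime argument you treat as routine fails as stated, and the far-regime step is missing its key ideas.

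\textbf{Near regime.} Relative perimeter is $\int_{\partial^*E}\sqrt{\nu_{x_1}^2+|\nu_{x'}|^2}\,d\mathcal{H}^{n-1}$. This is at most, not at least, $\int|\nu_{x_1}|+\int|\nu_{x'}|$, so the transverse variation cannot simply be added to the slicing bound $P_n(E)\ge1-|Z|$. With $f(x')=|E_{x'}|$, the correct combination is the vector bound $P_n(E)\ge\int\sqrt{p^2+|\nabla f|^2}\,dx'+|D^sf|(Q_{n-1})$, i.e.\ $P_n(E)\ge1-s(f)+G(f)$. Where slices are nontrivial, the absolutely continuous transverse variation is credited only through $\sqrt{1+|\nabla f|^2}-1$, which is quadratic for small gradients, while the loss $s(f)=|Z|$ is linear.

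Nor is there a lower bound in terms of $\Per(Z;Q_{n-1})$. Since $\nabla f=0$ a.e.\ on $Z$, all the gain comes from layers around $Z$. Moreover $Z$ can have infinite perimeter while $P_n(E)<\infty$: let $E$ be the subgraph of $\min(a,L\,\mathrm{dist}(\cdot,C))$ with $C$ closed, of small positive measure and infinite perimeter.

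The missing idea is the paper's saturation lemma:
\begin{enumerate}
\item Work on the layers $\{f<h\}$ and $\{f>1-h\}$, which have measure at most $q$ by Markov.
\item Use Fenchel's inequality $\psi(t)\ge\lambda rt-r^2/2$ with $r=g_\pm/h$ to turn $\int\psi(|\nabla f|)$ into $\frac{\lambda}{2h}|D(g_\pm^2)|-\frac{1}{2h^2}\int g_\pm^2$, allocating $|D^sf|$ via the BV chain rule.
\item Apply the small-support Cheeger bound $|D(g_\pm^2)|\ge H(q)\int g_\pm^2$ to level sets of $g_\pm^2$, not to $Z$. This beats $s(f)\le h^{-2}\int(g_-^2+g_+^2)$ only because $H(1/400)>1897/246$ gives $C(9/20,1/400)>401/400$.
\end{enumerate}

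\textbf{Far regime.} The step you flag as the main obstacle is where the content lies, and your heuristic does not supply it. Smallness of $\eta=\gamma_n(F\symdiff H)$, which is all a Mossel--Neeman type bound gives, says nothing about where $\partial^*F$ lies or how its normals point. So the Jacobian excess $\int_{\partial^*F}(W-1)\,d\sigma_F=O(\epsilon^2)$, with $W\ge1$ the exact weight you describe, cannot be compared with the compression of $\partial H$ on the basis of volume closeness alone. The paper needs three further ingredients:
\begin{enumerate}
\item The barycentric form of Barchiesi--Brancolini--Julin, with $\omega=-B(F)/|B(F)|$. Together with $\int_{\partial^*F}\nu_F\,d\sigma_F=-B(F)$, it gives the exact normal-excess identity $\int_{\partial^*F}|\nu_F-\omega|^2\,d\sigma_F=2(D+\beta)=O(\epsilon^2)$. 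Here $D$ is the Gaussian deficit and $\beta=\varphi(\Phi^{-1}(a))-|B(F)|$ the barycentre deficit.
\item The bounded witness $q_\omega(x)=\sum_i\omega_i^2(1-e^{-x_i^2/2})$, which satisfies $q_\omega\le(W-1)+2|\nu_F-\omega|$ on $\partial^*F$.
\item The Gaussian Gauss--Green formula for $X=q_\omega\omega$. It moves $\int_{\partial^*F}q_\omega\,d\sigma_F$ to $\int_{\partial H}q_\omega\,d\sigma_H$ at cost $(e^{-1/2}+|\Phi^{-1}(a)|)\eta+\beta$.
\end{enumerate}
Only then does the conditional Gaussian computation on $\partial H$ give $1-\max_i\omega_i^2\le1110\,|\Phi^{-1}(a)|$, and hence the modulus $40\sqrt{\epsilon}$.

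Because the statement has an explicit width, these constants are not optional. $\epszero$ is exactly the value with $40\sqrt{\epszero}=\delta_K=10^{-4}$, and a sketch that produces neither the modulus nor $\delta_K$ cannot yield it.
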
 

The precise statement version of this theorem is Theorem~\ref{plateau}.
This answers the question of Glaudo.

The proof combines two estimates, both uniform in the dimension.
The first establishes local optimality of coordinate slabs. If $S\subset Q_{n}$
is a coordinate slab of volume $a\in[0.49,0.51]$, then
\[
P_{n}(E)\ge P_{n}(S)=1
\]
for every set $E\subset Q_{n}$ of finite relative perimeter satisfying
\[
|E|=a,\qquad|E\triangle S|\le10^{-4}.
\]
The argument combines BV slicing with a small-support estimate to
control the loss of perimeter caused by empty and full fibers.  The
second estimate is a quantitative rigidity statement. If
\[
P_{n}(E)\le1,\qquad\bigl||E|-1/2\bigr|\le10^{-3},
\]
then there exists a coordinate slab $S$ with $|S|=|E|$ such that
\[
|E\triangle S|\le40\sqrt{\bigl||E|-1/2\bigr|}.
\]
To prove this, we pull $E$ back to Gaussian space under the coordinatewise
Gaussian distribution map and apply the Gaussian barycenter stability
theorem of Barchiesi, Brancolini, and Julin \cite{BarchiesiBrancoliniJulin2017}.
 Based on these two estimates, for $\bigl||E|-1/2\bigr|\le6.25\times10^{-12}$,
we show that the rigidity estimate places every competitor of perimeter
at most one within the neighborhood covered by the local optimality
estimate. Applying that estimate gives $P_{n}(E)\ge1$. 

\section{Preliminaries and Gaussian normalization}

We use BV slicing, coarea, and chain rules for relative variation;
see \cite{AmbrosioFuscoPallara2000,Maggi2012}. Expectations on cubes
are Lebesgue integrals, and let 
\[
\varphi(t)=\frac{e^{-t^{2}/2}}{\sqrt{2\pi}},\quad\Phi(t)=\int^{t}_{-\infty}\varphi(s)\dd s,\quad\kappa=\sqrt{2\pi},\quad\IG(a)=\varphi(\Phi^{-1}(a)).
\]
Extend $\IG$ by zero at the endpoints and set $J=\kappa\IG$. Let
$\gamma_{n}$ be standard Gaussian probability measure with density
$\varphi_{n}$. Our Gaussian perimeter normalization is 
\[
p_{\gamma}(F)=\int_{\partial^{*}F}\varphi_{n}(x)\dd\mathcal{H}^{n-1}(x),
\]
where $\partial^{*}F$ is the reduced boundary of $F$. The half-space
$H_{\omega,z}=\{x:\omega\cdot x<z\}$, $|\omega|=1$, has measure
$\Phi(z)$ and perimeter $\varphi(z)$.

The map $T:\R^{n}\to Q_{n}$ defined by $T(x)=(\Phi(x_{1}),\ldots,\Phi(x_{n}))$
is a smooth diffeomorphism. It pushes $\gamma_{n}$ onto Lebesgue
probability measure on $Q_{n}$: for every bounded Borel function
$b$, 
\[
\int_{\R^{n}}b(T(x))\dd\gamma_{n}(x)=\int_{Q_{n}}b(y)\dd y.
\]
This follows by the one-dimensional substitution $y_{i}=\Phi(x_{i})$,
$dy_{i}=\varphi(x_{i})\,dx_{i}$, in every coordinate.

If $E$ has finite relative perimeter and $F=T^{-1}(E)$, then $F$
has locally finite perimeter. Indeed, $T$ and its inverse have bounded
derivatives on corresponding compact subsets, so local changes of
variables preserve the BV property. The finite-perimeter area formula
gives 
\[
P_{n}(E)=\int_{\partial^{*}F}|\operatorname{cof}DT(x)\,\nu_{F}(x)|\dd\mathcal{H}^{n-1}(x),
\]
where $\nu_{F}$ is the measure-theoretic outward unit normal, $D\mathbf{1}_{F}=-\nu_{F}\,\mathcal{H}^{n-1}\!\restriction_{\partial^{*}F}$,
and 
\[
\operatorname{cof}DT=(\det DT)(DT)^{-\top}.
\]
Here $A^{-\top}:=(A^{-1})^{\top}$, and the cofactor matrix acts on
a normal to give the oriented surface-area scaling. Since 
\[
DT=\operatorname{diag}(\varphi(x_{1}),\ldots,\varphi(x_{n})),\qquad\det DT=\varphi_{n}(x),
\]
its $i$th normal component is $\varphi_{n}(x)\nu_{i}/\varphi(x_{i})$.
Consequently, 
\[
|\operatorname{cof}DT(x)\nu|=\varphi_{n}(x)\sqrt{\sum_{i}\frac{\nu^{2}_{i}}{\varphi(x_{i})^{2}}}=\kappa\varphi_{n}(x)\sqrt{\sum_{i}\nu^{2}_{i}e^{x^{2}_{i}}}.
\]
Thus, with 
\begin{equation}
W(x,\nu)=\sqrt{\sum_{i}\nu^{2}_{i}e^{x^{2}_{i}}},\label{eq:W}
\end{equation}
\begin{equation}
P_{n}(E)=\kappa\int_{\partial^{*}F}W(x,\nu_{F})\dd\sigma_{F},\qquad\dd\sigma_{F}=\varphi_{n}\dd\mathcal{H}^{n-1}.\label{area-transform}
\end{equation}
For completeness, the global formula uses the local identity 
\[
\Per(E;T(B_{R}))=\int_{\partial^{*}F\cap B_{R}}|\operatorname{cof}DT\,\nu_{F}|\dd\mathcal{H}^{n-1}
\]
and continuity from below as $R\to\infty$. This restricts perimeter
measures to open regions and adds no artificial boundary on $\partial B_{R}$.

Since $|\nu_{F}|=1$ and $e^{x^{2}_{i}}\ge1$, $W\ge1$. In particular,
$p_{\gamma}(F)\le P_{n}(E)/\kappa<\infty$. Gaussian isoperimetry
and $\gamma_{n}(F)=|E|$ give 
\begin{equation}
P_{n}(A)\ge\kappa p_{\gamma}(T^{-1}(A))\ge\kappa\IG(|A|)=J(|A|).\label{gauss-lower}
\end{equation}
The assertion is automatic for sets of infinite perimeter.

We shall also use the functional form of this comparison. Bobkov's
Gaussian inequality \cite[Corollary~1]{Bobkov1997} reads 
\[
\IG(\E_{\gamma_{d}}g)\le\E_{\gamma_{d}}\sqrt{\IG(g)^{2}+|\nabla g|^{2}},\qquad g:\R^{d}\to[0,1]\ \text{Lipschitz}.
\]
For $g=f\circ T$, the chain rule holds almost everywhere and gives
\[
|\nabla(f\circ T)(x)|^{2}=\sum_{i}\varphi(x_{i})^{2}|\partial_{i}f(T(x))|^{2}\le\kappa^{-2}|\nabla f(T(x))|^{2}.
\]
Multiplying Bobkov's inequality by $\kappa$ and changing variables
therefore yields 
\begin{equation}
J(\E f)\le\E\sqrt{J(f)^{2}+|\nabla f|^{2}},\qquad f:Q_{d}\to[0,1]\ \text{Lipschitz}.\label{bobkov}
\end{equation}
All expectations in this formula are uniform expectations on $Q_{d}$.
One can also recover \eqref{gauss-lower} from \eqref{bobkov}. Choose
$[0,1]$-valued Lipschitz functions $f_{j}$ converging strictly in
BV to $\mathbf{1}_{A}$, so that $f_{j}\to\mathbf{1}_{A}$ in $L^{1}$
and $\int|\nabla f_{j}|\to P_{n}(A)$; such approximation is available
on the cube \cite{AmbrosioFuscoPallara2000}. Since $J$ is bounded,
continuous, and zero at both endpoints, $\int J(f_{j})\to0$. Using
$\sqrt{J(f_{j})^{2}+|\nabla f_{j}|^{2}}\le J(f_{j})+|\nabla f_{j}|$
and passing to the limit gives the perimeter bound.

Differentiating the Gaussian profile, with $z=\Phi^{-1}(a)$, gives
\[
\IG'(a)=-z,\qquad\IG''(a)=-\frac{1}{\IG(a)},\qquad J''(a)=-\frac{\kappa^{2}}{J(a)}=-\frac{2\pi}{J(a)}.
\]
Thus $J$ is concave, symmetric about $1/2$, satisfies $J(1/2)=1$,
and takes values in $[0,1]$.

For a set $F$ with $\gamma_{n}(F)=\Phi(z)$, write 
\begin{equation}
B(F)=\int_{F}x\dd\gamma_{n},\qquad D=p_{\gamma}(F)-\varphi(z),\qquad\beta=\varphi(z)-|B(F)|.\label{eq:beta}
\end{equation}
To see $\beta\ge0$, take $\omega=-B(F)/|B(F)|$ when $B(F)\ne0$,
and put $H=H_{\omega,z}$. The one-dimensional Gaussian moment satisfies
$\int^{z}_{-\infty}s\varphi(s)\,ds=-\varphi(z)$, hence $B(H)=-\varphi(z)\omega$.
Equal volumes give 
\begin{align*}
\varphi(z)-|B(F)| & =\int_{\R^{n}}(\mathbf{1}_{F}-\mathbf{1}_{H})(\omega\cdot x-z)\dd\gamma_{n}\\
 & =\int_{F\triangle H}|\omega\cdot x-z|\dd\gamma_{n}\ge0.
\end{align*}
The integrands have the asserted sign because $F\setminus H$ lies
above the plane and $H\setminus F$ lies below it. If $B(F)=0$, nonnegativity
is immediate.

The Main Theorem of \cite{BarchiesiBrancoliniJulin2017} implies 
\begin{equation}
\beta\le160\pi^{3}(1+z^{2})D.\label{bbj-beta}
\end{equation}
Here the published perimeter is $\kappa p_{\gamma}$, its deficit
is $\kappa D$, and its numerical constant is $80\pi^{2}\kappa$.
The barycenter is unchanged. Thus its bound becomes $80\pi^{2}\kappa(1+z^{2})\kappa D=160\pi^{3}(1+z^{2})D$.
For $B(F)\ne0$, Proposition~4 of the same reference applies to the
half-space aligned with $-B(F)$, not just a half-space minimizing
the symmetric difference, and yields 
\begin{equation}
\eta^{2}\le4e^{-z^{2}/2}\beta,\qquad\eta=\gamma_{n}(F\triangle H_{\omega,z}).\label{bbj-eta}
\end{equation}
These are the external quantitative Gaussian estimates used below.

We record also the Gaussian Gauss--Green formula used in the proof.
For a bounded $C^{1}$ vector field $X$ such that $\operatorname{div}_{\gamma}X:=\operatorname{div}X-x\cdot X$
is Gaussian integrable, 
\begin{equation}
\int_{F}\operatorname{div}_{\gamma}X\dd\gamma_{n}=\int_{\partial^{*}F}X\cdot\nu_{F}\dd\sigma_{F}.\label{gauss-green}
\end{equation}
For compactly supported $X$, this follows from Euclidean Gauss--Green
applied to $\varphi_{n}X$, since $\operatorname{div}(\varphi_{n}X)=\varphi_{n}\operatorname{div}_{\gamma}X$.
For general $X$ as above, insert smooth cutoffs $\chi_{R}$ with
$0\le\chi_{R}\le1$, $\chi_{R}\to1$, and $|\nabla\chi_{R}|\le C/R$.
The extra term $\int_{F}\nabla\chi_{R}\cdot X\dd\gamma_{n}$ tends
to zero; the remaining volume integral converges by dominated convergence
and the boundary integral by $p_{\gamma}(F)<\infty$. In particular,
applying this to constant vectors gives 
\begin{equation}
\int_{\partial^{*}F}\nu_{F}\dd\sigma_{F}=-B(F).\label{eq:B}
\end{equation}

\section{BV slicing and local optimality of slabs }

Let $BV(Q_{d};[0,1])$ denote the set of $[0,1]$-valued functions
on $Q_{d}$ having bounded variation. For $f\in BV(Q_{d};[0,1])$,
the Lebesgue decomposition of its distributional derivative gives
\[
Df=\nabla f\,\mathcal{L}^{d}+D^{s}f.
\]
Here, $Df$ is the distributional derivative of $f$, i.e., an $\mathbb{R}^{d}$-valued
finite Radon measure defined by 
\[
\int_{Q_{d}}f\,\operatorname{div}\phi\,dx=-\int_{Q_{d}}\phi\cdot\dd Df,\qquad\phi\in C^{1}_{c}(Q_{d};\mathbb{R}^{d});
\]
$\mathcal{L}^{d}$ is $d$-dimensional Lebesgue measure; $\nabla f=\dfrac{d(Df)^{a}}{d\mathcal{L}^{d}}\in L^{1}(Q_{d};\mathbb{R}^{d})$
is the density of the absolutely continuous part which agrees almost
everywhere with the approximate gradient of $f$; $D^{s}f$ is the
singular part, satisfying $D^{s}f\perp\mathcal{L}^{d}$, which is
concentrated on a set of Lebesgue measure zero. The total variation
of $Df$ is 
\[
|Df|(Q_{d})=\int_{Q_{d}}|\nabla f(y)|\dd y+|D^{s}f|(Q_{d}).
\]

Put $\psi(t)=\sqrt{1+t^{2}}-1$. For $f\in BV(Q_{d};[0,1])$ set 
\begin{equation}
G(f)=\int_{Q_{d}}\psi(|\nabla f|)\dd y+|D^{s}f|(Q_{d}),\qquad s(f)=|\{f=0\}|+|\{f=1\}|.\label{eq:}
\end{equation}

\begin{lemma}[Fiber-volume bound]\label{fiber} For a finite-perimeter
$E\subset(0,1)\times Q_{d}$, let 
\[
E_{y}:=\{x\in(0,1):(x,y)\in E\}
\]
be its one-dimensional fiber and $f(y)=|E_{y}|$ be the length of
$E_{y}$. Then $f\in BV(Q_{d};[0,1])$ and 
\begin{equation}
P_{d+1}(E)\ge1-s(f)+G(f).\label{fiber-bound}
\end{equation}
\end{lemma}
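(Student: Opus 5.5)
The plan is to project the vector measure $D\mathbf 1_E$ onto the base cube $Q_d$, keeping the transversal ($x$-direction) component as a nonnegative scalar measure. I then compare total variations before and after projection, and bound the projected transversal component from below fiberwise by one-dimensional isoperimetry in $(0,1)$.

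\textbf{Setting up the projected measure.} Write $D\mathbf 1_E=(D_x\mathbf 1_E,D_y\mathbf 1_E)$ on $(0,1)\times Q_d$ and form the $\R^{1+d}$-valued measure
\[
\lambda=(|D_x\mathbf 1_E|,\,D_y\mathbf 1_E).
\]
For Radon–Nikodym densities $(a,b)$ with respect to $|D\mathbf 1_E|$ we have $|(|a|,b)|=|(a,b)|$, so $|\lambda|=|D\mathbf 1_E|$ and $|\lambda|\bigl((0,1)\times Q_d\bigr)=P_{d+1}(E)$. Let $\pi(x,y)=y$. Since the total variation of a pushforward never exceeds the pushforward of the total variation, $P_{d+1}(E)\ge|\pi_\#\lambda|(Q_d)$.

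\textbf{Identifying the two components.} Testing against $\phi\in C^1_c(Q_d;\R^d)$ extended constantly in $x$, Fubini gives $\int_{Q_d} f\operatorname{div}\phi=\int_E\operatorname{div}_y\phi=-\int\phi\cdot dD_y\mathbf 1_E$. Hence $\pi_\#(D_y\mathbf 1_E)=Df$, and in particular $f\in BV(Q_d;[0,1])$. For the first component I use the BV slicing theorem \cite{AmbrosioFuscoPallara2000}. It says that for a.e. $y$ the fiber $E_y$ has finite perimeter in $(0,1)$, and
\[
\pi_\#|D_x\mathbf 1_E|=\Bigl(y\mapsto |D\mathbf 1_{E_y}|(0,1)\Bigr)\mathcal L^d=:m\,\mathcal L^d.
\]
If $0<|E_y|<1$, then $\mathbf 1_{E_y}$ is a nonconstant BV function on an interval, so it has a jump and $m(y)\ge1$. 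Thus $m\ge\mathbf 1_{\{0<f<1\}}$ almost everywhere.

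\textbf{Lower bound on the total variation.} Now $\pi_\#\lambda=(m\,\mathcal L^d,\;\nabla f\,\mathcal L^d+D^sf)$. Its absolutely continuous part has density $(m,\nabla f)$. Its singular part is $(0,D^sf)$, which is concentrated on a Lebesgue-null set. The two parts are mutually singular, so
\[
|\pi_\#\lambda|(Q_d)=\int_{Q_d}\sqrt{m^2+|\nabla f|^2}\,dy+|D^sf|(Q_d).
\]
I split the integral according to the value of $f$:
\begin{itemize}
\item On $\{0<f<1\}$ the integrand is at least $\sqrt{1+|\nabla f|^2}=1+\psi(|\nabla f|)$.
\item On $\{f=0\}\cup\{f=1\}$, the approximate gradient $\nabla f$ vanishes a.e. (a standard property of BV functions on level sets), so $\psi(|\nabla f|)=0$ there and the integrand is $\ge0=\psi(|\nabla f|)$.
\end{itemize}
Summing the two regions gives $\int\sqrt{m^2+|\nabla f|^2}\ge(1-s(f))+\int\psi(|\nabla f|)$. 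Adding $|D^sf|(Q_d)$ yields \eqref{fiber-bound}.

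\textbf{Main obstacle.} The delicate step is the slicing identity $\pi_\#|D_x\mathbf 1_E|=m\,\mathcal L^d$ together with the a.e. finiteness of perimeter of the fibers. I will quote it from \cite{AmbrosioFuscoPallara2000} (slicing of BV functions along one coordinate direction). In fact only the inequality $\pi_\#|D_x\mathbf 1_E|\ge m\,\mathcal L^d$ is needed, and it follows from lower semicontinuity if one prefers to prove it by approximation with smooth functions. The remaining steps are measure-theoretic bookkeeping.
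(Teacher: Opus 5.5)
Your proof is correct and follows the paper's argument. It uses the same four ingredients: the slicing identity for the $x$-component, the identification $\pi_\#(D_y\mathbf{1}_E)=Df$ (the paper writes this with the opposite sign convention), the same Lebesgue-decomposition computation of the projected vector measure, and the same splitting via $m\ge\mathbf{1}_{\{0<f<1\}}$ and $\nabla f=0$ a.e.\ on $\{f=0\}\cup\{f=1\}$. Your step $|\pi_\#\lambda|(Q_d)\le\pi_\#|\lambda|(Q_d)=P_{d+1}(E)$ is a packaged form of the paper's duality/Cauchy--Schwarz estimate. One detail needs adding: the field $(0,\phi(y))$ is not compactly supported in $(0,1)\times Q_d$. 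You should insert cutoffs $\eta_j(x)\uparrow 1$, which contribute no derivative to $\operatorname{div}_y$, and pass to the limit by dominated convergence, as the paper does.
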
 
\begin{proof}
Write $\Omega=(0,1)\times Q_{d}$ and use coordinates $(x,y)$. Throughout
this proof boundary integrals are restricted to $\partial^{*}E\cap\Omega$,
and $\nu_{E}=(\nu_{x},\nu_{y})$. Let $\pi(x,y)=y$ and $p(y)=\Per(E_{y};(0,1))$
for almost every $y$. The BV slicing theorem \cite[Section 3.11]{AmbrosioFuscoPallara2000}
gives the measure identity 
\begin{equation}
\pi_{\#}\bigl(|\nu_{x}|\mathcal{H}^{d}\!\restriction_{\partial^{*}E\cap\Omega}\bigr)=p(y)\,dy.\label{fiber-pushforward}
\end{equation}
In particular $p$ is measurable, integrable, and $\int p\,dy\le P_{d+1}(E)$.

By the Gauss--Green formula for sets of finite perimeter and Fubini's
theorem (see \cite[Chapter 3]{AmbrosioFuscoPallara2000}), the transverse
signed pushforward satisfies  
\begin{equation}
\pi_{\#}\bigl(\nu_{y}\mathcal{H}^{d}\!\restriction_{\partial^{*}E\cap\Omega}\bigr)=-Df.\label{fiber-signed-pushforward}
\end{equation}
Indeed, for $\zeta\in C^{1}_{c}(Q_{d};\R^{d})$, test the distributional
derivative of $\mathbf{1}_{E}$ against $(0,\eta_{j}(x)\zeta(y))$,
where $\eta_{j}\in C^{1}_{c}(0,1)$, $0\le\eta_{j}\le1$, and $\eta_{j}\to1$
pointwise. There is no derivative of $\eta_{j}$ in the divergence
of this transverse field. The Gauss--Green formula and dominated
convergence therefore give 
\[
\int_{Q_{d}}f\,\operatorname{div}\zeta\,dy=\int_{\partial^{*}E\cap\Omega}\zeta(y)\cdot\nu_{y}\dd\mathcal{H}^{d}.
\]
The right side is bounded by $P_{d+1}(E)\|\zeta\|_{\infty}$. Since
$0\le f\le1$ and $f$ is integrable, this proves $f\in BV(Q_{d})$.
By the definition of $Df$, the left side above is also equal to $-\int_{Q_{d}}\zeta(y)\cdot\dd Df(y)$,
yielding the formula in \eqref{fiber-signed-pushforward}.

Combine the two measures into $\mu=(p\dd y,-Df)$, an $\R^{d+1}$-valued
measure on $Q_{d}$. For bounded Borel $\alpha\ge0$ and $\zeta:Q_{d}\to\R^{d}$
satisfying $\alpha^{2}+|\zeta|^{2}\le1$, Cauchy--Schwarz gives 
\[
\alpha|\nu_{x}|+\zeta\cdot\nu_{y}\le\sqrt{\alpha^{2}+|\zeta|^{2}}\sqrt{|\nu_{x}|^{2}+|\nu_{y}|^{2}}\le1.
\]
The pushforward identities in \eqref{fiber-pushforward} and \eqref{fiber-signed-pushforward}
imply 
\[
\int\alpha p\,dy-\int\zeta\cdot dDf=\int_{\partial^{*}E\cap\Omega}\left(\alpha(y)|\nu_{x}|+\zeta(y)\cdot\nu_{y}\right)d\mathcal{H}^{d}\le P_{d+1}(E).
\]
The supremum over these fields equals $|\mu|(Q_{d})$ by the total-variation
duality for vector measures. Requiring $\alpha\ge0$ loses nothing:
replacing $\alpha$ by $|\alpha|$ preserves the norm constraint and
increases the pairing because $p\,dy\ge0$. The Lebesgue decomposition
$\mu=(p,-\nabla f)\,dy+(0,-D^{s}f)$ consists of mutually singular
parts, so 
\begin{equation}
P_{d+1}(E)\ge|\mu|(Q_{d})=\int_{Q_{d}}\sqrt{p^{2}+|\nabla f|^{2}}\,dy+|D^{s}f|(Q_{d}).\label{vector-slicing}
\end{equation}
This is the multidimensional-parameter form of the standard fiber-perimeter
estimate; compare \cite[Proposition~2.5]{Milman2026}, where the parameter
is one-dimensional. The argument above supplies the version needed
here for arbitrary $d$.

For almost every $y$, the one-dimensional set $E_{y}$ has finite
perimeter. Its characteristic function has finitely many jumps inside
$(0,1)$, each of size one. If $0<f(y)<1$, it cannot be constant,
and at least one interior jump is present. Thus $p(y)\ge\mathbf{1}_{\{0<f(y)<1\}}$.
If $f(y)=0$ or $1$, $E_{y}$ is empty or full up to a null set and
$p(y)=0$. BV locality says that the approximate gradient of $f$
vanishes almost everywhere on each of the level sets $\{f=0\}$ and
$\{f=1\}$. Consequently, with $A=\{0<f<1\}$, 
\begin{align*}
P_{d+1}(E) & \ge\int_{A}\sqrt{1+|\nabla f|^{2}}\,dy+|D^{s}f|(Q_{d})\\
 & =|A|+\int_{Q_{d}}\bigl(\sqrt{1+|\nabla f|^{2}}-1\bigr)\,dy+|D^{s}f|(Q_{d})\\
 & =1-s(f)+G(f).
\end{align*}
The singular contribution $|D^{s}f|$ is kept explicitly in the variation
estimate. The locality argument is used only to identify the absolutely
continuous density $\nabla f$ on the relevant regions.
\end{proof}

For $0<q\le1/2$, let 
\[
H(q)=J(q)/q.
\]
Concavity of $J$ implies $J(t)\ge H(q)t$ for $0\le t\le q$. For
any nonnegative BV function $v$ with $|\{v>0\}|\le q$, let $a_{t}=|\{v>t\}|\le q$.
By \eqref{gauss-lower} and the coarea formula \cite[Theorem 3.40]{AmbrosioFuscoPallara2000},
\begin{equation}
\begin{aligned}|Dv|(Q_{d}) & =\int^{\infty}_{0}\Per(\{v>t\};Q_{d})\dd t\ge\int^{\infty}_{0}J(a_{t})\dd t\\
 & \ge H(q)\int^{\infty}_{0}a_{t}\dd t=H(q)\int_{Q_{d}}v\dd y.
\end{aligned}
\label{support-cheeger}
\end{equation}
Moreover $H(q)\to\infty$ as $q\downarrow0$. Indeed, writing $q=\overline{\Phi}(t)$
gives $t\to\infty$ and 
\[
\overline{\Phi}(t)=\int^{\infty}_{t}\varphi(s)\,ds\le\frac{1}{t}\int^{\infty}_{t}s\varphi(s)\,ds=\frac{\varphi(t)}{t}.
\]
Hence $H(q)=\kappa\varphi(t)/\overline{\Phi}(t)\ge\kappa t\to\infty$.
The coarea formula above uses the full measure $|Dv|$, including
its singular part. No estimate of the form $|Dv|\ge J(\int v)$ for
general nonnegative $v$ is needed. 

\begin{lemma}[Saturation compensation]\label{saturation} Let $a=\E f$,
$0<h<\min(a,1-a)$, and $0<q\le1/2$. If 
\[
\int|f-a|\le(\min(a,1-a)-h)q,
\]
then, provided $C(h,q):=\frac{\sqrt{3}hH(q)}{4}-\frac{1}{2}\ge0$,
\begin{equation}
G(f)\ge C(h,q)s(f),\label{saturation-bound}
\end{equation}
where $G,s$ were defined in \eqref{eq:}. \end{lemma}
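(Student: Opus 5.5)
The plan is to control the two pieces of $s(f)$, namely $|\{f=0\}|$ and $|\{f=1\}|$, separately by truncating $f$ near $0$ and near $1$, and then to apply the small-support estimate \eqref{support-cheeger}. By the symmetry $f\mapsto 1-f$, which preserves $G$, $s$ and the hypothesis, it suffices to treat both truncations in the same way. Put $m=\min(a,1-a)$ and define
\[
v=(h-f)_+,\qquad w=(f-(1-h))_+ .
\]
Both are nonnegative BV functions. Their variations are bounded by the parts of $|Df|$ carried by $\{f<h\}$ and $\{f>1-h\}$ respectively; these two sets are disjoint since $h<m\le 1/2$.

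The first step is to verify the support hypothesis of \eqref{support-cheeger}. On $\{f<h\}$ we have $|f-a|\ge a-h\ge m-h$, and on $\{f>1-h\}$ we have $|f-a|\ge 1-a-h\ge m-h$. Hence, by Chebyshev's inequality,
\[
|\{v>0\}|+|\{w>0\}|\le\frac{\int|f-a|}{m-h}\le q .
\]
So each support has measure at most $q$. Since $v=h$ on $\{f=0\}$ and $w=h$ on $\{f=1\}$, the estimate \eqref{support-cheeger} gives
\[
|Dv|(Q_d)+|Dw|(Q_d)\ \ge\ H(q)\Bigl(\int v+\int w\Bigr)\ \ge\ hH(q)\,s(f).
\]

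The second step is to bound $|Dv|+|Dw|$ from above by $G(f)$ plus a controllable error. By the chain rule and locality of BV, $\nabla f=0$ a.e.\ on $\{f=0\}\cup\{f=1\}$. Therefore
\[
|Dv|+|Dw|\le\int_{N}|\nabla f|\,dy+|D^{s}f|(Q_d),\qquad N=\{0<f<h\}\cup\{1-h<f<1\}.
\]
To convert $|\nabla f|$ into $\psi(|\nabla f|)$, I would use the Cauchy--Schwarz inequality $\sqrt{1+t^2}\ge \tfrac12+\tfrac{\sqrt3}{2}t$, which gives
\[
t\le \tfrac{2}{\sqrt3}\bigl(\psi(t)+\tfrac12\bigr).
\]
This linearization is the likely source of the constants $\sqrt3/4$ and $1/2$ in $C(h,q)$. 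Combining the two steps yields an inequality of the form
\[
\tfrac{\sqrt3}{2}hH(q)\,s(f)\le G(f)+\tfrac12|N| .
\]

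The main obstacle is the error term $\tfrac12|N|$, which is not a priori bounded by $s(f)$. My plan for absorbing it is not to use the support bound crudely, but to apply \eqref{support-cheeger} at the level of the coarea decomposition, in the form $\int_0^h\Per(\{f<t\})\,dt$. The aim is to compare the full-gradient region with the saturated region using the half-split: put half of the perimeter gain, namely $\tfrac{\sqrt3}{4}hH(q)s$, against $\int_N|\nabla f|$, and use the other half of the linearization to cancel the constant $\tfrac12$ against $s$. This is the origin of the factor $\tfrac14$ and of the final constant $C(h,q)=\tfrac{\sqrt3}{4}hH(q)-\tfrac12$. If this bookkeeping fails, the fallback is to choose the Cauchy--Schwarz unit vector adaptively according to whether $|\nabla f|$ is large or small on $N$. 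This is the step I would expect to need the most care.
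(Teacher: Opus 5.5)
Your truncation and support steps are correct, and so is the inequality you reach, $\tfrac{\sqrt3}{2}hH(q)\,s(f)\le G(f)+\tfrac12|N|$. But the argument stops there, and this is a genuine gap. The error $\tfrac12|N|$ multiplies the measure of the partially saturated region $N=\{0<f<h\}\cup\{1-h<f<1\}$. That measure is not bounded by any multiple of $s(f)$: you can have $s(f)$ arbitrarily small while $|N|$ is comparable to $q$. So no ``half-split'' of the gain can cancel it against $s(f)$.

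The obstruction is structural, not bookkeeping. For any constant Fenchel slope $\mu\in(0,1]$ your route gives $G(f)\ge\mu H(q)\int(v+w)-\psi^*(\mu)|N|$. If $f$ sits just below $h$ (or just above $1-h$) on most of $N$, then $\int_N(v+w)$ is negligible compared with $|N|$. Your fallback does not fix this either. Since \eqref{support-cheeger} only bounds from below the variation of a function of $f$, the weight in front of $|\nabla f|$ must depend on the value of $f$ (equivalently, on the level $t$ in the coarea formula), not on the size of $|\nabla f|$. Only then does it become the gradient density of some $\Theta\circ f$.

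The missing idea is to let the slope vanish as $f$ approaches the truncation level. Take $g_-=(h-f)_+$, $g_+=(f-1+h)_+$ and $\lambda=\sqrt3/2$. On $\{g_\pm>0\}$, apply Fenchel's inequality with slope $\lambda g_\pm/h$ and use $\psi^*(\lambda r)\le r^2/2$ for $0\le r\le1$. Since $(g_\pm/h)|\nabla f|=|\nabla(g_\pm^2)|/(2h)$ there, this gives
\[
\psi(|\nabla f|)\ge\frac{\lambda}{2h}\,|\nabla(g_\pm^2)|-\frac{g_\pm^2}{2h^2}.
\]
For the singular part, apply the chain rule to $\Psi_-(u)=(h-u)_+^2/(2h)$ and $\Psi_+(u)=(u-1+h)_+^2/(2h)$. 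These satisfy $|\Psi_-'|+|\Psi_+'|\le1$, so $\sum_\pm|D^s(g_\pm^2)|(Q_d)/(2h)\le|D^sf|(Q_d)$. Using $\lambda\le1$, you get
\[
G(f)\ge\sum_\pm\Bigl(\frac{\lambda}{2h}|D(g_\pm^2)|(Q_d)-\frac{1}{2h^2}\int g_\pm^2\Bigr).
\]

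Now apply \eqref{support-cheeger} to $g_\pm^2$ instead of $g_\pm$. The linearization error is proportional to the very quantity $\int g_\pm^2$ that the small-support estimate bounds from below, so it is absorbed. Because $g_\pm^2=h^2$ on $\{f=0\}$ and $\{f=1\}$, this yields $G(f)\ge h^{-2}C(h,q)\int(g_-^2+g_+^2)\ge C(h,q)s(f)$.

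In coarea language, this is your idea with the level $\{f<t\}$ weighted by $(h-t)/h$. It also explains the constant. The factor $\tfrac{\sqrt3}{4}$ equals $\lambda/2$, where the $\tfrac12$ comes from the quadratic truncation ($\Psi_\pm=h/2$ at saturation), not from splitting the gain in half. The $-\tfrac12$ is the bound $r^2/2$ on $\psi^*(\lambda r)$ at $r=1$.
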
 
\begin{proof}
Set $b=\min(a,1-a)-h>0$ and 
\[
g_{-}=(h-f)_{+},\qquad g_{+}=(f-1+h)_{+},\qquad\lambda=\sqrt{3}/2.
\]
Since $h<1/2$, the positivity sets $A_{-}=\{f<h\}$ and $A_{+}=\{f>1-h\}$
are disjoint. On $A_{-}$, $|f-a|>a-h\ge b$, and on $A_{+}$, $|f-a|>1-a-h\ge b$.
Therefore, by Markov inequality, 
\[
|A_{\pm}|\le|\{|f-a|>b\}|\le b^{-1}\int|f-a|\le q.
\]
Here and below the small-support condition concerns these positivity
sets, rather than topological supports.

\emph{Allocation of the singular variation.} Define the $C^{1}$ functions
\[
\Psi_{-}(u)=\frac{(h-u)^{2}_{+}}{2h},\qquad\Psi_{+}(u)=\frac{(u-1+h)^{2}_{+}}{2h},\quad0\le u\le1.
\]
Their derivatives satisfy 
\begin{equation}
\Psi_{-}'(u)=-\frac{(h-u)_{+}}{h},\qquad\Psi_{+}'(u)=\frac{(u-1+h)_{+}}{h},\qquad|\Psi_{-}'(u)|+|\Psi_{+}'(u)|\le1.\label{eq:Psi-derivative}
\end{equation}
For $0\le u\le v\le1$, the fundamental theorem of calculus yields
\begin{equation}
\sum_{\pm}|\Psi_{\pm}(v)-\Psi_{\pm}(u)|\le\int^{v}_{u}\sum_{\pm}|\Psi_{\pm}'(s)|\,ds\le v-u.\label{eq:Psi-sum}
\end{equation}
Write the singular BV derivative as $D^{s}f=D^{j}f+D^{c}f$, where
$D^{j}f$ is the jump part and $D^{c}f$ is the Cantor part \cite[Section 3.9]{AmbrosioFuscoPallara2000}.
The BV chain rule \cite[Theorem 3.96]{AmbrosioFuscoPallara2000} gives
\[
D^{c}(\Psi_{\pm}\circ f)=\Psi_{\pm}'(\widetilde{f})D^{c}f,\qquad D^{j}(\Psi_{\pm}\circ f)=(\Psi_{\pm}(f^{+})-\Psi_{\pm}(f^{-}))\nu_{f}\mathcal{H}^{d-1}\!\restriction_{J_{f}}.
\]
Here $\widetilde{f}$ is the approximate value outside the jump set,
$f^{\pm}$ are the traces, $\nu_{f}$ is the chosen jump normal, and
$J_{f}$ is the approximate jump set. The derivative bound in \eqref{eq:Psi-derivative}
controls the sum of the Cantor variations, and the preceding two-point
inequality in \eqref{eq:Psi-sum} controls the sum of the jump variations.
The jump and Cantor measures are mutually singular, implying
\[
|D^{s}(\Psi_{\pm}\circ f)|=|D^{c}(\Psi_{\pm}\circ f)|+|D^{j}(\Psi_{\pm}\circ f)|.
\]
It follows that 
\begin{align*}
\sum_{\pm}|D^{s}(\Psi_{\pm}\circ f)|(Q_{d}) & =\sum_{\pm}|D^{c}(\Psi_{\pm}\circ f)|(Q_{d})+|D^{j}(\Psi_{\pm}\circ f)|(Q_{d})\\
 & =\sum_{\pm}|\Psi_{\pm}'(\widetilde{f})D^{c}f|(Q_{d})+|(\Psi_{\pm}(f^{+})-\Psi_{\pm}(f^{-}))\nu_{f}\mathcal{H}^{d-1}\!\restriction_{J_{f}}|(Q_{d})\\
 & \le|D^{c}f|(Q_{d})+|f^{+}-f^{-}|\,\mathcal{H}^{d-1}\!\restriction_{J_{f}}(Q_{d})\\
 & =|D^{c}f|(Q_{d})+|D^{j}f|(Q_{d})=|D^{s}f|(Q_{d}).
\end{align*}
Since $\Psi_{\pm}\circ f=g^{2}_{\pm}/(2h)$, this reads 
\begin{equation}
\sum_{\pm}|D^{s}(g^{2}_{\pm})|/(2h)\le|D^{s}f|.\label{singular-allocation}
\end{equation}

\emph{The absolutely continuous energy.} For $\psi(t)=\sqrt{1+t^{2}}-1$,
$t\ge0$, the convex conjugate is $\psi^{*}(v)=1-\sqrt{1-v^{2}}$
for $0\le v\le1$. For $v<1$, this follows by maximizing $vt-\psi(t)$
at $t=v/\sqrt{1-v^{2}}$; the endpoint follows by a limit. Thus Fenchel's
inequality is $\psi(t)\ge vt-\psi^{*}(v)$. Recall $\lambda=\frac{\sqrt{3}}{2}$.
For $0\le r\le1$, 
\[
\psi^{*}(\lambda r)=\frac{\lambda^{2}r^{2}}{1+\sqrt{1-\lambda^{2}r^{2}}}\le\frac{(3/4)r^{2}}{1+1/2}=\frac{r^{2}}{2}.
\]
On $A_{\pm}$ set $r=g_{\pm}/h$, $v=\lambda r=\lambda g_{\pm}/h$,
and $t=|\nabla f|$. The chain rule for the absolutely continuous
density gives $|\nabla(g^{2}_{\pm})|=2g_{\pm}|\nabla f|$ there, and
the gradient vanishes outside the corresponding active range almost
everywhere. Summing over the disjoint sets $A_{\pm}$, and discarding
the nonnegative energy on their complement, gives 
\[
\int\psi(|\nabla f|)\ge\sum_{\pm}\left\{ \frac{\lambda}{2h}\int|\nabla(g^{2}_{\pm})|-\frac{1}{2h^{2}}\int g^{2}_{\pm}\right\} .
\]
Add the singular term in $G(f)$ and use \eqref{singular-allocation}
and $\lambda\le1$: 
\begin{align}
G(f) & \ge\sum_{\pm}\left\{ \frac{\lambda}{2h}|D(g^{2}_{\pm})|(Q_{d})-\frac{1}{2h^{2}}\int g^{2}_{\pm}\right\} \nonumber \\
 & \ge\left(\frac{\lambda H(q)}{2h}-\frac{1}{2h^{2}}\right)\int(g^{2}_{-}+g^{2}_{+}).\label{fenchel-allocation}
\end{align}
The second line is \eqref{support-cheeger}, applied separately to
$g^{2}_{-}$ and $g^{2}_{+}$, whose positivity sets have measure
at most $q$. On $\{f=0\}$ one has $g^{2}_{-}=h^{2}$, and on $\{f=1\}$
one has $g^{2}_{+}=h^{2}$. Hence $\int(g^{2}_{-}+g^{2}_{+})\ge h^{2}s(f)$.
Noting that the coefficient in the last line is $C(h,q)/h^{2}\ge0$,
this proves $G(f)\ge C(h,q)s(f)$. 
\end{proof}

Based on lemmas above, we now establish local optimality of slabs.

\begin{theorem}[Local optimality of slabs]\label{local-slab} For
every compact $K\subset(0,1)$ there exists $\delta_{K}>0$, independent
of $n$, such that any finite-perimeter set $E\subset Q_{n}$ of volume
$a\in K$ satisfying $|E\symdiff S^{\pm}_{i,a}|\le\delta_{K}$ has
$P_{n}(E)\ge1$. Here 
\[
S^{-}_{i,a}=\{x_{i}<a\}\cap Q_{n},\qquad S^{+}_{i,a}=\{x_{i}>1-a\}\cap Q_{n}.
\]
Equality occurs only for coordinate slabs. For the explicit choices
\begin{equation}
K=[49/100,51/100],\qquad\delta_{K}=10^{-4},\label{local-constants}
\end{equation}
for $n\ge2$ one has, in coordinates adapted to the nearby slab, 
\begin{equation}
P_{n}(E)\ge1+\frac{1}{401}G(f).\label{local-gain}
\end{equation}
\end{theorem}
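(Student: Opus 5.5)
We reduce to the fiber structure of Lemma~\ref{fiber} and then use Lemma~\ref{saturation} to pay for the saturated fibers. By the reflection $x_{i}\mapsto1-x_{i}$ and a permutation of coordinates, we may assume the nearby slab is $S=S^{-}_{1,a}=\{x_{1}<a\}$. We write $Q_{n}=(0,1)\times Q_{d}$ with $d=n-1$ and set $f(y)=|E_{y}|$, so that $\E f=|E|=a$. Lemma~\ref{fiber} gives
\[
P_{n}(E)\ge1-s(f)+G(f).
\]
It therefore suffices to show $G(f)\ge C\,s(f)$ with $C>1$. Then
\[
P_{n}(E)\ge1+(1-1/C)G(f)\ge1,
\]
and \eqref{local-gain} follows once $1-1/C\ge1/401$, i.e.\ $C\ge401/400$. (The case $n=1$ is trivial: in $(0,1)$ any set of volume in $(0,1)$ has a boundary point.)

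\textbf{Step 1: $L^{1}$ control of $f$.} For each $y$, the fiber of $S$ is the interval $(0,a)$, so
\[
|f(y)-a|=\bigl||E_{y}|-|(0,a)|\bigr|\le|E_{y}\symdiff(0,a)|.
\]
Integrating over $y$ gives $\int|f-a|\le|E\symdiff S|\le\delta_{K}$.

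\textbf{Step 2: choice of parameters.} We need $0<h<\min(a,1-a)$ and $q\le1/2$ such that
\[
(\min(a,1-a)-h)\,q\ge\delta_{K},\qquad C(h,q)=\tfrac{\sqrt3}{4}hH(q)-\tfrac12\ge\tfrac{401}{400}.
\]
For $a\in[0.49,0.51]$ we have $\min(a,1-a)\ge0.49$. A natural attempt is $h=0.29$, so that $\min(a,1-a)-h\ge0.2$, together with $q=5\times10^{-4}$; then the first condition holds, since $0.2\cdot5\times10^{-4}=10^{-4}$. For the second we need $H(q)=J(q)/q$. With $q=\overline{\Phi}(t)$ we have $t\approx3.29$ and $\varphi(t)\approx1.78\times10^{-3}$, hence $H(q)\approx\kappa\cdot1.78\times10^{-3}/5\times10^{-4}\approx8.9$. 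Then
\[
C\approx0.433\cdot0.29\cdot8.9-0.5\approx0.62,
\]
which is not enough. So I will optimize jointly: larger $h$ raises the linear factor but shrinks $q$. Taking $h=0.45$ gives $\min(a,1-a)-h\ge0.04$ and $q=2.5\times10^{-3}$, so $t\approx2.81$ and $\varphi(t)\approx7.7\times10^{-3}$. Then $H\approx\kappa\cdot3.08\approx7.7$ and
\[
C\approx0.433\cdot0.45\cdot7.7-0.5\approx1.0,
\]
which is borderline. I expect this numerical balancing to be the main obstacle: one must find a certified pair $(h,q)$ with $C(h,q)\ge401/400$. For instance $h=0.47$, margin $0.02$, $q=5\times10^{-3}$ gives $t\approx2.576$, $\varphi\approx0.0145$, $H\approx\kappa\cdot2.9\approx7.27$, and
\[
C\approx0.433\cdot0.47\cdot7.27-0.5\approx0.98.
\]
So the optimum seems to sit near $C\approx1$, and rigorous lower bounds on $\varphi(t)/\overline{\Phi}(t)$, for example via Mills-ratio bounds of the form $\varphi(t)/\overline{\Phi}(t)\ge\bigl(t+\sqrt{t^{2}+4}\bigr)/2$, will be needed to certify a choice such as $h\approx0.46$ with $C(h,q)\ge401/400$. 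These constants, including $401$, were presumably tuned exactly to make this verification go through.

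\textbf{Step 3: conclusion and equality.} With such $(h,q)$, Lemma~\ref{saturation} gives $s(f)\le G(f)/C$, and the inequality above yields \eqref{local-gain}, hence $P_{n}(E)\ge1$. For general compact $K$ the same argument works by choosing $h$ close to $\min_{K}\min(a,1-a)$ and $q$ small: then $H(q)\to\infty$ makes $C(h,q)>1$, and $\delta_{K}=(\min(a,1-a)-h)q$ is uniform in $n$.

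For equality, $P_{n}(E)=1$ forces $G(f)=0$, so $s(f)=0$ as well. Hence $\nabla f=0$ and $D^{s}f=0$, which makes $f$ constant equal to $a$. Moreover, equality in \eqref{vector-slicing} forces $p=1$ almost everywhere and $\int p=P_{n}(E)$, so $\nu_{y}=0$ $\mathcal{H}^{n-1}$-a.e.\ on $\partial^{*}E$. Then $D\mathbf{1}_{E}$ points in the $x_{1}$ direction, so $\mathbf{1}_{E}$ is independent of $y$ and each fiber is a single interval. The boundary of $E$ is therefore a single hyperplane $x_{1}=c$, i.e.\ $E$ is a coordinate slab. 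Since $|E\symdiff S|$ is small, $E$ is $S^{-}_{1,a}$ itself (or its reflection).
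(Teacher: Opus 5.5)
Your skeleton matches the paper's proof. You reduce to $S^-_{1,a}$, bound $\int|f-a|\le|E\symdiff S|$, combine Lemma~\ref{fiber} with Lemma~\ref{saturation} to get $C(h,q)>1$, make the qualitative choice of $(h,q)$ for general $K$, and handle equality via $G(f)=0$, $f\equiv a$, $p=1$, $\nu_y=0$.

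\textbf{The gap is the explicit part of the statement.} To obtain $\delta_K=10^{-4}$ and the gain $1/401$ you must \emph{prove} $C(h,q)\ge 401/400$ for some pair with $(49/100-h)q\ge 10^{-4}$. You stop at rough decimals. You even tried the paper's pair $h=9/20$, $q=1/400$ and called it borderline. In fact $t_q=\Phi^{-1}(1-q)\approx 2.807$, $J(q)=e^{-t_q^2/2}\approx 0.01945$, $H(q)\approx 7.78$ and $C\approx 1.016$. Your fallback $h\approx 0.46$ is worse, with $C\approx 1.007$. The margin over $1.0025$ is small, so it must be certified, not estimated.

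The certification tool you name also points the wrong way. Birnbaum's inequality is $\overline\Phi(t)/\varphi(t)>2/(t+\sqrt{t^2+4})$, that is, $\varphi(t)/\overline\Phi(t)<(t+\sqrt{t^2+4})/2$. Your direction already fails at $t=0$, where $\varphi(0)/\overline\Phi(0)=\sqrt{2/\pi}<1$. An upper bound on $H$ cannot yield a lower bound on $C$.

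What is needed is an \emph{upper} bound on the Mills ratio $M=\overline\Phi/\varphi$ that is sharp to second order, together with a lower bound on $t_q$. The crude bound $M(t)<1/t$ gives only $H>\kappa t_q\approx 7.04$, hence $C\approx 0.87$. The paper proceeds as follows.
\begin{itemize}
\item It uses $M(t)<(t^2+2)/(t^3+3t)$, so $H(q)=\kappa/M(t_q)>\kappa\,k(t_q)$ with $k(t)=t+t/(t^2+2)$ increasing.
\item It proves $t_q>14/5$ by showing $\overline\Phi(14/5)>1/400$. This uses the lower bound $M(t)>t(t^2+5)/(t^4+6t^2+3)$ at $t=14/5$, together with $e^{98/25}<101/2$ and $\kappa<251/100$.
\item With $\kappa>5/2$ this gives $H(1/400)>\tfrac52 k(14/5)=1897/246$.
\item Hence $C(9/20,1/400)>\tfrac{17073\sqrt3}{19680}-\tfrac12>\tfrac{401}{400}$, the last step reducing to $3\cdot 28455^2>49282^2$.
\end{itemize}
This certificate clears $401/400$ by only about $10^{-4}$, which shows how sharp the bounds must be. Without such a certificate, your argument establishes the qualitative theorem and the equality case, but not \eqref{local-gain} with $\delta_K=10^{-4}$.
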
 

\begin{remark}In contrast, in the discrete hypercubes, local optimality
of subcubes in maximizing noise stability (a probability-generalized
version of the isoperimetric problem) was established by the author
\cite{Yu2026LocalOptimality}. \end{remark}
\begin{proof}
In dimension one, every set with volume strictly between zero and
one has relative perimeter at least one. Relative perimeter one means
that its characteristic function has exactly one interior jump, so
it is $(0,a)$ or $(1-a,1)$ up to null sets. Assume henceforth that
$n\ge2$. Coordinate permutations and reflections preserve both relative
perimeter and volume; we may therefore use $S^{-}_{1,a}=\{x_{1}<a\}$
and let $f(y)=|E_{y}|$. Fubini's theorem and the inequality between
an integral and the integral of its absolute value give 
\[
\E f=|E|=a,\qquad|f(y)-a|=\left|\int^{1}_{0}(\mathbf{1}_{E}(x,y)-\mathbf{1}_{\{x<a\}})\,dx\right|\le|E_{y}\triangle(0,a)|.
\]
Integrating shows $\int|f-a|\le|E\triangle S^{-}_{1,a}|$.

For a nonempty compact $K\subset(0,1)$, let $m=\min_{a\in K}\min(a,1-a)>0$
and set $h=m/2$. Choose $0<q<1/2$ such that $hH(q)\ge10/\sqrt{3}$,
which is possible because $H(q)\to\infty$. Set $\delta_{K}=(m/2)q$.
Let $E\subset Q_{n}$ be any finite-perimeter set of volume $a\in K$
satisfying $|E\symdiff S^{-}_{1,a}|\le\delta_{K}$. Then, $\int|f-a|\le(m/2)q$.
Moreover, $\min(a,1-a)-h\ge m/2$, we have $\int|f-a|\le(\min(a,1-a)-h)q$.
So the saturation lemma (Lemma \ref{saturation}) applies. Its coefficient
satisfies 
\[
C(h,q)=\frac{\sqrt{3}hH(q)}{4}-\frac{1}{2}\ge2.
\]
Thus $s(f)\le G(f)/2$ and the fiber-volume bound (Lemma \ref{fiber})
gives $P_{n}(E)\ge1+G(f)-s(f)\ge1+G(f)/2$. For the empty compact
set the assertion is vacuous.

For the explicit interval $[49/100,51/100]$, set $m=49/100$, $h=9/20$
(with better choice than $h=m/2$), and $q=1/400$. Now $\min(a,1-a)-h\ge49/100-9/20=1/25$
and $\delta_{K}=(1/25)q=10^{-4}$. Appendix~\ref{numeric} proves
\begin{equation}
H(1/400)>\frac{1897}{246},\qquad C(9/20,1/400)>\frac{401}{400}.\label{H-certificate}
\end{equation}
Consequently $s(f)\le400G(f)/401$ and $P_{n}(E)\ge1+G(f)/401$, proving
\eqref{local-gain}.

It remains to justify equality for arbitrary BV competitors. In either
version, $P_{n}(E)=1$ forces $G(f)=0$. Since $\psi(t)=0$ only at
$t=0$, both $\nabla f=0$ almost everywhere and $D^{s}f=0$; hence
$Df=0$. A BV function with zero distributional derivative on a connected
open cube is constant almost everywhere (for example, mollify on interior
cubes and use zero gradient). Its mean fixes the constant as $f=a$.
Since $0<a<1$, slicing now gives $p(y)\ge1$ almost everywhere. On
the other hand, 
\[
1=P_{n}(E)\ge\int p(y)\,dy=\int_{\partial^{*}E\cap Q_{n}}|\nu_{x}|\,d\mathcal{H}^{n-1}\ge1.
\]
Thus $\int(1-|\nu_{x}|)\,d\mathcal{H}^{n-1}=0$. Because $|\nu_{E}|=1$,
this implies $\nu_{y}=0$ almost everywhere on the reduced boundary,
and all transverse distributional derivatives of $\mathbf{1}_{E}$
vanish. By BV slicing in the transverse variables, for almost every
$x$ the function $y\mapsto\mathbf{1}_{E}(x,y)$ has zero derivative
on the connected cube $Q_{n-1}$. It is therefore a constant equal
to either zero or one. Hence $E=A\times Q_{n-1}$ almost everywhere
for a measurable $A\subset(0,1)$. Product variation gives $\Per(A;(0,1))=P_{n}(E)=1$
and $|A|=a$. The one-dimensional classification above gives $A=(0,a)$
or $(1-a,1)$, completing the equality statement. 
\end{proof}

\begin{remark}[Quantitative fiber control]\label{fiber-control}
Concavity of $J$, together with $J(0)=J(1)=0$ and $J(1/2)=1$, gives
$J(t)\ge2\min(t,1-t)$. Thus \eqref{gauss-lower} implies the cube
Cheeger bound $\Per(A;Q_{d})\ge2\min(|A|,1-|A|)$. Let $m_{f}$ be
a median of a real-valued $f\in BV(Q_{d})$, so that $|\{f>m_{f}\}|\le1/2$
and $|\{f<m_{f}\}|\le1/2$. Coarea, applied above and below the median,
and layer-cake give 
\begin{align*}
|Df|(Q_{d}) & =\int^{\infty}_{0}\left[\Per(\{f>m_{f}+t\};Q_{d})+\Per(\{f<m_{f}-t\};Q_{d})\right]dt\\
 & \ge2\int^{\infty}_{0}\left[|\{f>m_{f}+t\}|+|\{f<m_{f}-t\}|\right]dt\\
 & =2\int|f-m_{f}|.
\end{align*}
The coarea identity ignores exceptional threshold values of one-dimensional
measure zero. Since $|\E f-m_{f}|\le\int|f-m_{f}|$, the triangle
inequality implies $\int|f-\E f|\le2\int|f-m_{f}|\le|Df|(Q_{d})$.

To control the variation by $G$, put $b=\int|\nabla f|$ and $c=|D^{s}f|(Q_{d})$.
Minkowski's integral inequality for $(1,|\nabla f|)$ gives 
\[
1+G(f)=\int\sqrt{1+|\nabla f|^{2}}+c\ge\sqrt{1+b^{2}}+c\ge\sqrt{1+(b+c)^{2}}.
\]
The last inequality is the triangle inequality for the vectors $(1,b)$
and $(0,c)$. Since $b+c=|Df|(Q_{d})$, under \eqref{local-constants}
we obtain 
\[
P_{n}(E)-1\ge\frac{1}{401}\left(\sqrt{1+\left(\int|f-a|\right)^{2}}-1\right).
\]
This controls fiber-volume variation. A bound on the full set distance
additionally requires control of fiber arrangement. \end{remark}

\section{Quantitative rigidity of slabs }

We now establish a quantitative stability theorem for coordinate half-cubes. 

\begin{theorem}[Quantitative rigidity of slabs]\label{rigidity}
Let $E\subset Q_{n}$ have finite relative perimeter, with 
\[
P_{n}(E)\le1,\qquad\epsilon=\left||E|-\frac{1}{2}\right|\le10^{-3}.
\]
Then 
\begin{equation}
\min_{i,\pm}|E\symdiff S^{\pm}_{i,|E|}|\le40\sqrt{\epsilon}.\label{rigidity-estimate}
\end{equation}
The inequality is strict if $\epsilon>0$. At $\epsilon=0$, $E$
is a coordinate half-cube. \end{theorem}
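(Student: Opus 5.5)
The plan is to pass to Gaussian space with $F=T^{-1}(E)$ and $\gamma_n(F)=|E|=\Phi(z)$. Since $|\Phi^{-1}(1/2\pm\epsilon)|\le \kappa\epsilon(1+o(1))$, we have $|z|\lesssim\epsilon$. I will combine two pieces of information. The Barchiesi--Brancolini--Julin estimates \eqref{bbj-beta}--\eqref{bbj-eta} say that $F$ is close to the half-space $H_{\omega,z}$ aligned with $-B(F)$. The anisotropic weight $W\ge1$ in \eqref{area-transform} says that this $\omega$ must be nearly a coordinate vector. The triangle inequality for $\gamma_n$-symmetric differences, transported back by $T$, then gives the slab estimate.

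\emph{Step 1 (deficits).} From $P_n(E)\le1$ and \eqref{area-transform}, we get $\kappa\int_{\partial^*F}W\,d\sigma_F\le 1$. Combined with $\kappa p_\gamma(F)\ge J(|E|)$, this yields both
\[
D=p_\gamma(F)-\varphi(z)\le \frac{1-J(|E|)}{\kappa},\qquad \kappa\int_{\partial^*F}(W-1)\,d\sigma_F\le 1-J(|E|).
\]
Since $J''=-2\pi/J$, $J(1/2)=1$, and $J$ is symmetric, we have $1-J(1/2\pm\epsilon)\le c_0\epsilon^2$ with an explicit $c_0$ slightly above $\pi$ for $\epsilon\le10^{-3}$. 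Then \eqref{bbj-beta} gives $\beta\le 160\pi^3(1+z^2)D=O(\epsilon^2)$, and \eqref{bbj-eta} gives $\eta=\gamma_n(F\symdiff H_{\omega,z})=O(\epsilon)$. If $B(F)=0$, then $\beta=\varphi(z)$, which contradicts the smallness of $\beta$. So $\omega=-B(F)/|B(F)|$ is well defined.

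\emph{Step 2 (normals align with $\omega$).} By \eqref{eq:B}, $\int_{\partial^*F}\nu_F\cdot\omega\,d\sigma_F=|B(F)|=\varphi(z)-\beta$. Together with $p_\gamma(F)=\varphi(z)+D$, this gives
\[
\tfrac12\int_{\partial^*F}|\nu_F-\omega|^2\,d\sigma_F=\int_{\partial^*F}(1-\nu_F\cdot\omega)\,d\sigma_F\le \beta+D=O(\epsilon^2).
\]

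\emph{Step 3 (coordinate direction, main obstacle).} I will use $W(x,\nu)^2=1+\sum_i\nu_i^2(e^{x_i^2}-1)\ge 1+\sum_i\nu_i^2x_i^2$. Truncating to the region where the right side is bounded gives $W-1\ge c\sum_i\nu_i^2\min(x_i^2,1)$. The idea is to replace $\nu_i$ by $\omega_i$ using Step 2, at a cost controlled by Cauchy--Schwarz, which is where a square root is lost. Then the boundary integral of $\sum_i\omega_i^2\min(x_i^2,1)$ is transferred to the hyperplane $\{\omega\cdot x=z\}$. For that transfer I will apply \eqref{gauss-green} to $X=\omega_i\,\theta(x_i)e_i$ with $\theta$ a bounded $C^1$ truncation of $x\mapsto x^2$. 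This gives
\[
\int_{\partial^*F}\omega_i\theta(x_i)\nu_i\,d\sigma_F=\int_F\operatorname{div}_\gamma X\,d\gamma_n,
\]
and the same identity holds for $H_{\omega,z}$. The difference of the two volume integrals is bounded by $\|\operatorname{div}_\gamma X\|_\infty\,\eta$, or better by a Gaussian-tail estimate using $\eta$. On the hyperplane, $x_i$ is Gaussian with mean $z\omega_i$ and variance $1-\omega_i^2$. So the hyperplane integral is $\gtrsim\varphi(z)\sum_i\omega_i^2(1-\omega_i^2)\ge \varphi(z)(1-\max_i\omega_i^2)$, using $\sum\omega_i^2=1$. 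Combining everything, I aim for $1-\omega_{i_0}^2\le C\epsilon$ for some index $i_0$. The square-root losses from Step 2 and from $\eta$ are what make the final rate $\sqrt\epsilon$ rather than $\epsilon$. This bookkeeping of explicit constants, with truncation and tails, is the delicate part, and it is where the constant $40$ must be certified.

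\emph{Step 4 (conclusion).} Choose the sign $\pm$ so that $\pm\omega_{i_0}>0$. Then $\gamma_n(H_{\omega,z}\symdiff H_{\pm e_{i_0},z})\le \angle(\omega,\pm e_{i_0})/\pi+O(|z|)$, which is $O(\sqrt\epsilon)$. The set $T(H_{\pm e_{i_0},z})$ is exactly the coordinate slab of volume $\Phi(z)=|E|$, and $T$ preserves measure. Hence
\[
|E\symdiff S^{\pm}_{i_0,|E|}|\le \eta+O(\sqrt\epsilon)\le 40\sqrt\epsilon,
\]
with explicit arithmetic in the range $\epsilon\le10^{-3}$ leaving strict slack when $\epsilon>0$. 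For $\epsilon=0$ all deficits vanish. Then $D=0$, so $F$ is a half-space by Gaussian isoperimetric equality, and $\kappa\int(W-1)\,d\sigma_F=0$ forces $\sum_i\nu_i^2x_i^2=0$ on the hyperplane, so $\omega$ is a coordinate vector. Thus $E$ is a coordinate half-cube.
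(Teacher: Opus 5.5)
Your Steps 1, 2 and 4 agree with the paper: the deficits $D,R=O(\epsilon^2)$, $\beta=O(\epsilon^2)$, $\eta=O(\epsilon)$, the normal-excess identity, and the final angle comparison. The pointwise bound $W-1\ge c\sum_i\nu_i^2\min(x_i^2,1)$ and the Cauchy--Schwarz swap $\nu_F\to\omega$ are also sound. The gap is the transfer to the hyperplane in Step 3. This is where dimension-independence is decided, and it is not a matter of bookkeeping.

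To reach $\sum_i\omega_i^2(1-\omega_i^2)$ you must sum your fields, i.e.\ use $X=\sum_i\omega_i\theta(x_i)e_i$. Its Gaussian divergence is $\sum_i\omega_i\bigl(\theta'(x_i)-x_i\theta(x_i)\bigr)$, and neither of your two proposed estimates controls it uniformly in $n$:
\begin{itemize}
\item If $\theta$ is a bounded truncation of $x^2$ tending to a positive constant, then $x\theta(x)$ grows linearly and $\|\operatorname{div}_\gamma X\|_\infty=\infty$.
\item If you make $x\theta(x)$ bounded instead, the sup norm is of order $\sum_i|\omega_i|$, which can be $\sqrt n$. Coordinatewise Gaussian-tail estimates summed with weights $|\omega_i|$ carry the same $\|\omega\|_1$ factor.
\item Even with $\theta(s)=1-e^{-s^2/2}$ one gets $\operatorname{div}_\gamma X=-\omega\cdot x+2\sum_i\omega_ix_ie^{-x_i^2/2}$. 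The first term could be handled, but the second is only bounded in sup norm by $2e^{-1/2}\|\omega\|_1$.
\item Exploiting the cancellation in $\sum_i\omega_i g(x_i)$ through a global sub-Gaussian tail bound is dimension-free, but it costs a factor $\sqrt{\log(1/\eta)}$. That gives $1-\max_i\omega_i^2\lesssim\epsilon\sqrt{\log(1/\epsilon)}$ and a final distance of order $\sqrt\epsilon\,\log^{1/4}(1/\epsilon)$, which does not yield $40\sqrt\epsilon$ as $\epsilon\to0$.
\end{itemize}
Removing the logarithm would need a further idea, such as splitting off the dominant coordinate so the remaining sum has $\ell^2$ weight $\sqrt{1-\max_i\omega_i^2}$. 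Your proposal contains nothing of this kind.

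The paper avoids this by choosing a field parallel to $\omega$: $X=q_\omega\,\omega$ with $q_\omega=\sum_i\omega_i^2(1-e^{-x_i^2/2})\in[0,1]$. Then
\[
\operatorname{div}_\gamma X=\sum_i\omega_i^3x_ie^{-x_i^2/2}-(\omega\cdot x)q_\omega .
\]
The cubes make the first term bounded by $e^{-1/2}\sum_i|\omega_i|^3\le e^{-1/2}$, uniformly in $n$. The unbounded part is a $[0,1]$-valued multiple of $\omega\cdot x$, and it is controlled by the exact aligned-moment identity $\int_{F\symdiff H}|\omega\cdot x-z|\,d\gamma_n=\beta$, which gives $\int_{F\symdiff H}|\omega\cdot x|\,d\gamma_n\le t\eta+\beta$ with $t=|z|$. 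On $\partial^*F$ one simply uses $q_\omega(\omega\cdot\nu_F)\le q_\omega$.

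Write $R=\int_{\partial^*F}(W-1)\,d\sigma_F$, $p=p_\gamma(F)$, $e=\int_{\partial^*F}|\nu_F-\omega|^2\,d\sigma_F$ and $Q_H=\int_{\partial H}q_\omega\,d\sigma_H$. Together with your Step 2, the field $X=q_\omega\omega$ yields $Q_H\le R+2\sqrt{pe}+(e^{-1/2}+t)\eta+\beta\le 111t$. Hence $1-\max_i\omega_i^2\le 1110t\le 3330\epsilon$, with no logarithmic loss and no dependence on $n$, and your Step 4 then produces the constant $40$.
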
 
\begin{proof}
Write $a=|E|$, $z=\Phi^{-1}(a)$, $t=|z|$, $F=T^{-1}(E)$, and $p=p_{\gamma}(F)$.
Recall $W(x,\nu)=\sqrt{\sum_{i}\nu^{2}_{i}e^{x^{2}_{i}}}$ defined
in \eqref{eq:W}. Set 
\[
R=\int_{\partial^{*}F}(W-1)\dd\sigma_{F},\qquad D=p-\varphi(z).
\]
Both $D$ and $R$ are nonnegative, and \eqref{area-transform} and
$P_{n}(E)\le1$ give 
\[
p+R=\frac{P_{n}(E)}{\kappa}\le\kappa^{-1},\qquad D+R\le\kappa^{-1}(1-e^{-t^{2}/2}).
\]
Using $1-e^{-s}\le s$ and $\kappa>5/2$ yields 
\begin{equation}
p<\frac{2}{5},\qquad0\le D,R\le\frac{t^{2}}{5}.\label{deficit-bounds}
\end{equation}
The bounds on $\pi$ and $\kappa$ used here and below are proved
in Appendix~\ref{numeric}.

\emph{Step 1: the threshold and Gaussian deficits.} Since $\kappa<251/100$
and $e^{-s}>1-s$ for $s>0$, 
\[
\varphi(1/100)>\frac{100}{251}\frac{19999}{20000}=\frac{19999}{50200}>\frac{3}{8}.
\]
Thus $\Phi(1/100)-1/2>3/800>10^{-3}$. Symmetry and $\epsilon=|a-1/2|\le10^{-3}$
imply $t<1/100$. As $\varphi(s)\ge\varphi(1/100)>3/8$ on $[0,t]$,
$\epsilon=\int^{t}_{0}\varphi(s)\,ds\ge(3/8)t$. In particular, 
\begin{equation}
t<1/100,\qquad t\le(8/3)\epsilon\le3\epsilon.\label{z-bound}
\end{equation}
Recall $B(F)=\int_{F}x\dd\gamma_{n}$ and $\beta=\varphi(z)-|B(F)|$
defined in \eqref{eq:beta}. By \eqref{bbj-beta} and \eqref{deficit-bounds},
\begin{equation}
\beta\le160\pi^{3}(1+z^{2})D\le32(22/7)^{3}(1+10^{-4})t^{2}\le1000t^{2}.\label{beta-bound}
\end{equation}
Since $\varphi(z)>3/8$ and $\beta<1/10$, $|B(F)|=\varphi(z)-\beta>11/40>0$.
Let $\omega=-B(F)/|B(F)|$ and $H=H_{\omega,z}$. Equation~\eqref{bbj-eta}
gives 
\begin{equation}
\eta:=\gamma_{n}(F\triangle H)\le2e^{-z^{2}/4}\sqrt{\beta}\le2\sqrt{\beta}\le64t.\label{eta-bound}
\end{equation}
Using $\int_{\partial^{*}F}\nu_{F}\dd\sigma_{F}=-B(F)$ from \eqref{eq:B},
we obtain the exact normal-excess identity 
\begin{align}
e & :=\int_{\partial^{*}F}|\nu_{F}-\omega|^{2}\dd\sigma_{F}\nonumber \\
 & =2p-2\omega\cdot\int_{\partial^{*}F}\nu_{F}\dd\sigma_{F}=2(p-|B(F)|)=2(D+\beta)\le2001t^{2}.\label{normal-excess}
\end{align}
All these quantities vanish at $t=0$. 

\emph{Step 2: a bounded witness on the boundary of $F$.} For a unit
vector $v$ define 
\[
q_{v}(x)=\sum_{i}v^{2}_{i}(1-e^{-x^{2}_{i}/2}),\qquad0\le q_{v}\le1.
\]
At a boundary point put $\nu=\nu_{F}(x)$. Cauchy--Schwarz with weights
$\nu^{2}_{i}$ gives $W\ge\sum_{i}\nu^{2}_{i}e^{x^{2}_{i}/2}$. For
$s\ge0$, $e^{s}-1\ge1-e^{-s}$, and thus,  $W-1\ge q_{\nu}$. Writing
$b_{i}=1-e^{-x^{2}_{i}/2}\in[0,1]$, another Cauchy--Schwarz estimate
gives 
\[
|q_{\omega}-q_{\nu}|=\left|\sum_{i}b_{i}(\omega_{i}-\nu_{i})(\omega_{i}+\nu_{i})\right|\le|\omega-\nu|\,|\omega+\nu|\le2|\omega-\nu|.
\]
Integrating and using the normal-excess bound gives 
\begin{equation}
Q_{F}:=\int_{\partial^{*}F}q_{\omega}\dd\sigma_{F}\le\int_{\partial^{*}F}(q_{\nu_{F}}+2|\omega-\nu_{F}|)\dd\sigma_{F}\le R+2\int_{\partial^{*}F}|\omega-\nu_{F}|\dd\sigma_{F}\le R+2\sqrt{pe}.\label{witness-F}
\end{equation}

\emph{Step 3: transfer to the aligned half-space.} Set $X=q_{\omega}\omega$
and $r_{\omega}=\operatorname{div}_{\gamma}X=\omega\cdot\nabla q_{\omega}-(\omega\cdot x)q_{\omega}$.
Since $\partial_{i}q_{\omega}=\omega^{2}_{i}x_{i}e^{-x^{2}_{i}/2}$
and $\sup_{s}|s|e^{-s^{2}/2}=e^{-1/2}$, 
\[
|\omega\cdot\nabla q_{\omega}|\le e^{-1/2}\sum_{i}|\omega_{i}|^{3}\le e^{-1/2}.
\]
The last inequality uses $|\omega_{i}|\le1$ and $\sum_{i}\omega^{2}_{i}=1$.
In particular $|r_{\omega}|\le e^{-1/2}+|\omega\cdot x|$ is integrable.

Equal volumes and alignment give the exact identity 
\begin{align}
\int_{F\triangle H}|\omega\cdot x-z|\dd\gamma_{n} & =\int(\mathbf{1}_{F}-\mathbf{1}_{H})(\omega\cdot x-z)\dd\gamma_{n}\nonumber \\
 & =\omega\cdot B(F)-\omega\cdot B(H)=\varphi(z)-|B(F)|=\beta,\label{aligned-moment}
\end{align}
where $B(H)=-\varphi(z)\omega$ and the term involving $z$ cancels
because the volumes agree. Therefore 
\[
\int_{F\triangle H}|\omega\cdot x|\dd\gamma_{n}\le t\eta+\beta,
\]
and 
\[
\left|\int_{F}r_{\omega}\dd\gamma_{n}-\int_{H}r_{\omega}\dd\gamma_{n}\right|\le(e^{-1/2}+t)\eta+\beta.
\]
Formula~\eqref{gauss-green} applies to $X=q_{\omega}\omega$, whose
norm is at most one. On $\partial H$ the outward normal is $\omega$,
so 
\[
\int_{H}r_{\omega}\dd\gamma_{n}=\int_{\partial H}q_{\omega}\dd\sigma_{H}=:Q_{H}.
\]
On $\partial^{*}F$, $q_{\omega}\ge0$ and $\omega\cdot\nu_{F}\le1$,
so 
\[
\begin{aligned}\int_{F}r_{\omega}\,d\gamma_{n} & =\int_{\partial^{*}F}q_{\omega}(\omega\cdot\nu_{F})\,d\sigma_{F}\le\int_{\partial^{*}F}q_{\omega}\,d\sigma_{F}=Q_{F}.\end{aligned}
\]
Combining these facts yields 
\begin{equation}
Q_{H}\le\int_{F}r_{\omega}\dd\gamma_{n}+\left|\int_{H}r_{\omega}\dd\gamma_{n}-\int_{F}r_{\omega}\dd\gamma_{n}\right|\le R+2\sqrt{pe}+(e^{-1/2}+t)\eta+\beta.\label{witness-transfer}
\end{equation}
For $t>0$, the preceding estimates and $e^{-1/2}<2/3$ give 
\[
\frac{Q_{H}}{t}<\frac{1}{500}+57+64\left(\frac{2}{3}+\frac{1}{100}\right)+10<111.
\]
At $t=0$ all terms in \eqref{witness-transfer} vanish. Hence $Q_{H}\le111t$
in all cases.

\emph{Step 4: concentration of the normal on a coordinate.} Let $X_{0}$
be standard Gaussian in $\R^{n}$. Its decomposition $X_{0}=(\omega\cdot X_{0})\omega+Z$
has independent components, where $Z$ is centered (degenerate) Gaussian
with covariance $\operatorname{Id}-\omega\otimes\omega$. Thus, conditionally
on $\omega\cdot X_{0}=z$, the $i$th coordinate of $X_{0}$ has mean
$\omega_{i}z$ and variance $1-\omega^{2}_{i}$. For a Gaussian $Y$
with mean $m$ and variance $s^{2}$, completion of the square gives
\[
\E e^{-Y^{2}/2}=\frac{1}{\sqrt{1+s^{2}}}\exp\left(-\frac{m^{2}}{2(1+s^{2})}\right);
\]
the same formula holds for $s=0$ by direct evaluation. Consequently,
\[
\E[1-e^{-X^{2}_{0,i}/2}\mid\omega\cdot X_{0}=z]=1-\frac{\exp\{-\omega^{2}_{i}z^{2}/[2(2-\omega^{2}_{i})]\}}{\sqrt{2-\omega^{2}_{i}}}\ge(1-2^{-1/2})(1-\omega^{2}_{i}).
\]
For the inequality, discard the exponential factor and use the concavity
of $b(v)=1-(1+v)^{-1/2}$ on $[0,1]$: $b''(v)=-3(1+v)^{-5/2}/4<0$,
so $b(v)\ge vb(1)=v(1-2^{-1/2})$.

Gaussian integration on the plane factors as $\varphi(z)$ times the
conditional Gaussian law. It follows that 
\begin{align}
Q_{H} & =\int_{\partial H}q_{\omega}\dd\sigma_{H}=\varphi(z)\E[q_{\omega}(X_{0})\mid\omega\cdot X_{0}=z]\ge\varphi(z)(1-2^{-1/2})\sum_{i}\omega^{2}_{i}(1-\omega^{2}_{i})\nonumber \\
 & =\varphi(z)(1-2^{-1/2})\left(1-\sum_{i}\omega^{4}_{i}\right)\ge\frac{1}{10}\left(1-\max_{i}\omega^{2}_{i}\right).\label{coordinate-concentration}
\end{align}
Indeed $\sum_{i}\omega^{4}_{i}\le(\max_{i}\omega^{2}_{i})\sum_{i}\omega^{2}_{i}$,
$\varphi(z)>3/8$, and $1-2^{-1/2}>4/15$. Put $u=1-\max_{i}\omega^{2}_{i}$.
We have $u\le10Q_{H}\le1110t$. If $t=0$, then $u=0$ and $\eta=0$;
therefore $\omega$ is a signed coordinate vector $e_{i}$ or $-e_{i}$
for some $i$, and $F=H$ almost everywhere. This proves the half-volume
equality statement.

\emph{Step 5: comparison with a same-volume coordinate half-space.}
Choose an index attaining $\max_{i}|\omega_{i}|$ and let $\xi$ be
the corresponding signed coordinate unit vector with $\omega\cdot\xi=\max_{i}|\omega_{i}|=\sqrt{1-u}$.
Their angle is $\theta=\arcsin\sqrt{u}\in[0,\pi/2]$. If $\omega$
and $\xi$ are not collinear, the orthogonal projection of a standard
Gaussian vector onto $\operatorname{span}\{\omega,\xi\}$ has the
standard two-dimensional Gaussian distribution, which is rotationally
invariant. Its polar angle is uniform and independent of its radius.
The central half-spaces disagree on two sectors of angle $\theta$,
hence $\gamma_{n}(H_{\omega,0}\triangle H_{\xi,0})=\theta/\pi$. For
collinear vectors, both sides are zero. Also, for any unit $v$,
the nested half-spaces at thresholds $0$ and $z$ have symmetric-difference
measure $|\Phi(z)-1/2|=\epsilon$. The triangle inequality therefore
gives 
\begin{align*}
\gamma_{n}(H_{\omega,z}\triangle H_{\xi,z}) & \le\gamma_{n}(H_{\omega,z}\triangle H_{\omega,0})+\gamma_{n}(H_{\omega,0}\triangle H_{\xi,0})+\gamma_{n}(H_{\xi,0}\triangle H_{\xi,z})\\
 & \le\theta/\pi+2\epsilon\le\tfrac{1}{2}\sqrt{u}+t,
\end{align*}
where $\arcsin r\le(\pi/2)r$ on $[0,1]$ and $2\epsilon=2\int^{t}_{0}\varphi(s)\,ds\le2\varphi(0)t<t$.
Thus, for $\epsilon>0$, 
\begin{align*}
\gamma_{n}(F\triangle H_{\xi,z}) & \le\eta+\gamma_{n}(H_{\omega,z}\triangle H_{\xi,z})\\
 & \le65t+\tfrac{1}{2}\sqrt{1110t}\\
 & \le195\epsilon+\tfrac{1}{2}\sqrt{3330\epsilon}<40\sqrt{\epsilon},
\end{align*}
where the last inequality follows since $\epsilon\le1/1000$. Finally,
if $\xi=e_{i}$, $T(H_{\xi,z})=S^{-}_{i,a}$; if $\xi=-e_{i}$, $T(H_{\xi,z})=S^{+}_{i,a}$,
since $\Phi(-z)=1-a$. The map $T$ preserves the measure of symmetric
differences. This proves \eqref{rigidity-estimate}. 
\end{proof}

The proof also gives qualitative rigidity for sequences with $|E_{k}|\to1/2$
and $P_{n_{k}}(E_{k})\to1$, even when some perimeters exceed one.
To make the assertion precise, set $a_{k}=|E_{k}|$, $z_{k}=\Phi^{-1}(a_{k})\to0$,
and 
\[
\tau_{k}=\frac{P_{n_{k}}(E_{k})}{\kappa}-\varphi(z_{k})\ge0.
\]
With the notation of the proof, the area formula gives $D_{k}+R_{k}=\tau_{k}\to0$.
Hence $D_{k},R_{k}\to0$, and \eqref{bbj-beta} implies $\beta_{k}\to0$.
Eventually $|B(F_{k})|=\varphi(z_{k})-\beta_{k}>0$, so $\omega_{k}$
is defined. Then \eqref{bbj-eta} gives $\eta_{k}\to0$, \eqref{normal-excess}
gives $e_{k}\to0$, and $p_{k}$ stays bounded. The right side of
\eqref{witness-transfer} tends to zero. Since $\varphi(z_{k})(1-2^{-1/2})$
is bounded below by a positive constant, the first inequality in \eqref{coordinate-concentration}
forces $1-\max_{i}\omega^{2}_{k,i}\to0$. The final half-space comparison,
now with error $\eta_{k}+\frac{1}{2}\sqrt{1-\max_{i}\omega^{2}_{k,i}}+2|a_{k}-1/2|$,
tends to zero. After applying $T$, this says that the distance to
some same-volume coordinate slab tends to zero, uniformly over the
varying dimensions.

\section{An explicit central plateau}

\begin{theorem}[Central plateau with explicit width]\label{plateau}
Set 
\[
\epszero=\frac{1}{160\,000\,000\,000}=6.25\times10^{-12}.
\]
For every $n\ge1$ and every $a$ with $|a-1/2|\le\epszero$, 
\begin{equation}
I_{n}(a)=1.\label{plateau-result}
\end{equation}
A finite-perimeter set $E\subset Q_{n}$ of volume $a$ has $P_{n}(E)=1$
if and only if it is a coordinate slab, up to null sets. Also $I_{\infty}(a):=\inf_{n\ge1}I_{n}(a)=1$
on this interval. \end{theorem}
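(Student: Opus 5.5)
The plan is to combine Theorem~\ref{rigidity} with Theorem~\ref{local-slab}. Fix $n\ge1$ and $a$ with $\epsilon:=|a-1/2|\le\epszero$. Coordinate slabs have volume $a$ and perimeter one, so $I_n(a)\le1$. It therefore suffices to show that every finite-perimeter $E\subset Q_n$ with $|E|=a$ satisfies $P_n(E)\ge1$, and that equality forces a slab. Dimension one is handled directly, as in the proof of Theorem~\ref{local-slab}, so we assume $n\ge2$.

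\emph{Case analysis.} If $P_n(E)>1$ there is nothing to prove. Otherwise $P_n(E)\le1$, and since $\epszero\le10^{-3}$, Theorem~\ref{rigidity} applies. It gives $i$ and a sign with
\[
|E\symdiff S^{\pm}_{i,a}|\le40\sqrt{\epsilon}\le40\sqrt{\epszero}.
\]
The choice of $\epszero$ is exactly what makes this fit the local neighbourhood: $\sqrt{\epszero}=\sqrt{6.25\times10^{-12}}=2.5\times10^{-6}$, so $40\sqrt{\epszero}=10^{-4}=\delta_K$. Moreover $a\in[49/100,51/100]=K$. Theorem~\ref{local-slab} with the explicit constants \eqref{local-constants} then yields $P_n(E)\ge1$. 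Hence $P_n(E)=1$, and the equality clause of Theorem~\ref{local-slab} shows that $E$ is a coordinate slab up to null sets. Conversely, slabs have perimeter one. This establishes \eqref{plateau-result} together with the characterization of minimizers.

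\emph{The statement for $I_\infty$.} It is immediate: $I_\infty(a)=\inf_n I_n(a)=1$, because every term equals one.

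\emph{Remaining points to check.} The only delicate points are bookkeeping. First, the slab supplied by Theorem~\ref{rigidity} must have the same volume $a$ and be of the form $S^{\pm}_{i,a}$ required in Theorem~\ref{local-slab}; it is, by construction. Second, the inequality $40\sqrt{\epszero}\le\delta_K$ holds with equality, and this is allowed because both theorems use non-strict inequalities. No genuine obstacle remains, since the heavy lifting is contained in the two earlier theorems.
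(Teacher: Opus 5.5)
Your proposal is correct and follows the paper's own proof. For competitors with $P_n(E)\le1$ you apply Theorem~\ref{rigidity}, observe that $40\sqrt{\epszero}=10^{-4}=\delta_K$ and $a\in[49/100,51/100]$ so that Theorem~\ref{local-slab} applies, and use its equality clause to classify minimizers; your separate treatment of $n=1$ is a harmless extra precaution that matches how the explicit constants of Theorem~\ref{local-slab} are stated for $n\ge2$.
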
 
\begin{proof}
Coordinate slabs have volume $a$ and relative perimeter one, so $I_{n}(a)\le1$.
Consider any admissible competitor $E$. If $P_{n}(E)>1$ (including
infinite perimeter), there is nothing to prove. If $P_{n}(E)\le1$,
Theorem~\ref{rigidity} applies because $|a-1/2|\le\epszero<10^{-3}$
and supplies a same-volume slab $S$ such that 
\[
|E\triangle S|\le40\sqrt{\epszero}=10^{-4}.
\]
The volume lies in $[49/100,51/100]$, so Theorem~\ref{local-slab}
gives $P_{n}(E)\ge1$. Every competitor therefore has perimeter at
least one, and thus, $I_{n}(a)=1$. If $P_{n}(E)=1$, the equality
statement in Theorem~\ref{local-slab} forces $E$ to be a coordinate
slab. Conversely, every such slab has perimeter one. Taking the infimum
over $n$ gives $I_{\infty}(a)=1$ on the same interval. 
\end{proof}

Table~\ref{constants} summarizes the dimension-independent constants.
The small numerical width reflects a conservative combination of the
published Gaussian constant and a square-root rigidity modulus. It
is an admissible value of $\varepsilon$, not the maximal plateau
half-width. 
\begin{table}[ht]
\centering %
\begin{tabular}{lll}
\toprule 
Estimate  & Constant  & Role\tabularnewline
\midrule 
Small-support bound  & $H(1/400)>1897/246$  & Saturation cost\tabularnewline
Slab neighborhood  & $\delta_{K}=10^{-4}$  & Local minimality\tabularnewline
Local gain  & $G(f)/401$  & Equality rigidity\tabularnewline
Slab distance  & $40\sqrt{|a-1/2|}$  & Global rigidity for $P\le1$\tabularnewline
Plateau half-width  & $\epszero=\delta^{2}_{K}/1600$  & Theorem~\ref{plateau}\tabularnewline
\bottomrule
\end{tabular}\caption{Constants proving the central plateau.}
\label{constants} 
\end{table}

\section{Conjectures for the limiting profile}

\label{conjectures-section} We now record two conjectures that extend
the central plateau theorem. Throughout this section, 
\[
I_{\infty}(a)=\inf_{n\ge1}I_{n}(a),\qquad v=\min(a,1-a).
\]
Let $\omega_{k}=\pi^{k/2}/\Gamma(1+k/2)$ be the volume of the Euclidean
unit ball in $\R^{k}$, and define 
\begin{equation}
c_{k}=\frac{k}{2}\omega^{1/k}_{k},\qquad B_{k}(v)=c_{k}v^{1-1/k}\quad(v>0).\label{corner-branches}
\end{equation}
In particular, $B_{1}(v)=1$, $B_{2}(v)=\sqrt{\pi v}$, and $B_{3}(v)=3(\pi/6)^{1/3}v^{2/3}$.
The remaining coefficients are 
\[
c_{4}=2(\pi^{2}/2)^{1/4},\qquad c_{5}=\frac{5}{2}(8\pi^{2}/15)^{1/5},\qquad c_{6}=3(\pi^{3}/6)^{1/6}.
\]
The geometric interpretation is a positive-orthant ball in $k$ coordinates,
extended cylindrically in the remaining coordinates. Its volume and
relative perimeter are 
\[
v=\frac{\omega_{k}r^{k}}{2^{k}},\qquad P=\frac{k\omega_{k}r^{k-1}}{2^{k}}=B_{k}(v).
\]
This interpretation requires $r\le1$, equivalently $v\le\omega_{k}/2^{k}$,
so that the ball does not cross the opposite cube faces. The factors
$2^{-k}$ arise because the orthant occupies $2^{-k}$ of a ball,
and the coordinate-plane boundaries are excluded from relative perimeter.
A product with unused coordinates preserves both this volume and perimeter.

\begin{conjecture}[Seven-branch limiting profile]
\label{profile-conjecture} For every $a\in(0,1)$, 
\begin{equation}
I_{\infty}(a)=\min\{\pi\IG(a),B_{1}(v),B_{2}(v),B_{3}(v),B_{4}(v),B_{5}(v),B_{6}(v)\},\qquad v=\min(a,1-a).\label{seven-branch-conjecture}
\end{equation}
At the endpoints, $I_{\infty}(0)=I_{\infty}(1)=0$. 
\end{conjecture}

The conjecture concerns the value of the infinite-dimensional infimum.
It does not assert that the infimum is attained in a fixed finite
dimension on the Gaussian branch, nor that every finite-dimensional
minimizer is a cylindrical orthant ball. Such a finite-dimensional
classification is a different statement; indeed, Milman \cite[Theorem~1.6]{Milman2026}
disproves the general tubular-face classification for cubes of dimension
at least ten.

Solving $B_{k}(\tau_{k})=B_{k-1}(\tau_{k})$ gives
\[
\tau_{k}=\left(\frac{k-1}{k}\right)^{k(k-1)}\frac{\omega^{\,k}_{k-1}}{\omega^{\,k-1}_{k}}.
\]
In particular, 
\begin{align*}
\tau_{6} & =\frac{5^{24}}{2^{7}3^{31}\pi^{3}}\simeq0.024314291553, & \tau_{5} & =\frac{2^{23}3^{4}\pi^{2}}{5^{16}}\simeq0.043949566442,\\
\tau_{4} & =\frac{6561}{8192\pi^{2}}\simeq0.081148472397, & \tau_{3} & =\frac{4\pi}{81}\simeq0.155140377955, & \tau_{2} & =\frac{1}{\pi}\simeq0.318309886184.
\end{align*}
The Gaussian-to-six-dimensional crossing is denoted by $\tau_{\mathrm{G}}$
and satisfies 
\begin{equation}
\pi\IG(\tau_{\mathrm{G}})=B_{6}(\tau_{\mathrm{G}}),\qquad\tau_{\mathrm{G}}\simeq0.015890647092.\label{gaussian-crossing}
\end{equation}

\begin{table}[ht]
\centering %
\begin{tabular}{lll}
\toprule 
Volume range  & Proposed value  & Branch\tabularnewline
\midrule 
$0<v\le\tau_{\mathrm{G}}$  & $\pi\IG(v)$  & Gaussian\tabularnewline
$\tau_{\mathrm{G}}\le v\le\tau_{6}$  & $B_{6}(v)$  & Six-dimensional orthant ball\tabularnewline
$\tau_{6}\le v\le\tau_{5}$  & $B_{5}(v)$  & Five-dimensional orthant ball\tabularnewline
$\tau_{5}\le v\le\tau_{4}$  & $B_{4}(v)$  & Four-dimensional orthant ball\tabularnewline
$\tau_{4}\le v\le\tau_{3}$  & $B_{3}(v)$  & Three-dimensional orthant ball\tabularnewline
$\tau_{3}\le v\le1/\pi$  & $\sqrt{\pi v}$  & Quarter-disk cylinder\tabularnewline
$1/\pi\le v\le1/2$  & $1$  & Coordinate slab\tabularnewline
\bottomrule
\end{tabular}\caption{Predicted branches of Conjecture~\ref{profile-conjecture}. Reflection
about $a=1/2$ gives the other half of the profile.}
\label{candidate-table} 
\end{table}

\begin{theorem}[Upper bound and small-volume asymptotics \cite{Yu2025LargeDeviations,yu2026sharp}]\label{small-volume}
For relative Euclidean perimeter on unit cubes, 
\[
I_{\infty}(a)\le\pi\IG(a)\qquad a\in[0,1],
\]
and 
\begin{equation}
I_{\infty}(a)\sim\pi\IG(a)\sim\pi a\sqrt{2\log(1/a)}\qquad(a\downarrow0).\label{small-volume-asymptotic}
\end{equation}
Moreover, the upper bound $\pi\IG(a)$ is asymptotically attained
by the set 
\begin{equation}
\left\{x\in Q_{n}:\sqrt{\frac{2}{n}}\sum^{n}_{i=1}\cos(\pi x_{i})\le\Phi^{-1}(a)\right\}.
\end{equation}
\end{theorem}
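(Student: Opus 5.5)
The statement has three parts: the upper bound $I_\infty\le\pi\IG$, the small-volume lower bound, and the claim that the cosine sets attain the upper bound. I would prove the upper bound and the attainment claim together with the explicit sets in the statement. Write $U_i=x_i$ for independent uniform coordinates on $Q_n$, and put
\[
S_n=\sqrt{2/n}\sum_{i}\cos(\pi x_i),\qquad z=\Phi^{-1}(a).
\]
Since $\E\cos(\pi U)=0$ and $\E\cos^2(\pi U)=1/2$, the central limit theorem gives $S_n\Rightarrow N(0,1)$. I would choose $z_n\to z$ so that $|\{S_n\le z_n\}|=a$ exactly. This is possible because $S_n$ has a continuous law for $n\ge1$, and it removes any need to adjust volumes afterwards. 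The gradient satisfies
\[
|\nabla S_n|^2=\frac{2\pi^2}{n}\sum_i\sin^2(\pi x_i)=\pi^2\Bigl(2-\frac{2}{n}\sum_i\cos^2(\pi x_i)\Bigr),
\]
which concentrates at $\pi^2$ by the law of large numbers.

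The coarea formula expresses the relative perimeter of the smooth level set as
\[
P_n(\{S_n<z_n\})=\rho_n(z_n)\,\E\bigl[|\nabla S_n|\,\big|\,S_n=z_n\bigr],
\]
where $\rho_n$ is the density of $S_n$. Conditional Cauchy--Schwarz then bounds the conditional expectation by $\pi\bigl(2-2\,\E[\cos^2(\pi U_1)\mid S_n=z_n]\bigr)^{1/2}$, using exchangeability of the coordinates. It therefore suffices to prove the following local limit theorem for the two-dimensional sum $\sum_i(\cos\pi U_i,\cos^2\pi U_i)$:
\[
\rho_n(z_n)\to\varphi(z),\qquad \E\bigl[\cos^2(\pi U_1)\mid S_n=z_n\bigr]\to\tfrac12 .
\]
This vector is supported on a curve, but after a few convolutions its law has a bounded density. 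Hence a standard local CLT for nonlattice vectors applies, and it gives the conditional mean as a ratio of two local densities. Together these give $\limsup_n P_n\le\pi\varphi(z)=\pi\IG(a)$, which proves both the upper bound and the claim that the displayed sets attain it asymptotically. The second equivalence in \eqref{small-volume-asymptotic} is elementary: it follows from $\overline\Phi(t)\sim\varphi(t)/t$ and $t\sim\sqrt{2\log(1/a)}$, as in the computation of $H(q)$ above.

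The lower bound $I_\infty(a)\ge(1-o(1))\pi\IG(a)$ as $a\downarrow0$ is the main obstacle. The Gaussian comparison \eqref{gauss-lower} only yields $J=\sqrt{2\pi}\,\IG$, which has the wrong constant, because the Lipschitz constant of $T$ is $1/\sqrt{2\pi}$ rather than $1/\pi$. Bakry--Ledoux curvature arguments are unavailable because the cube has no positive curvature. My first attempt would use the dimension-free log-Sobolev inequality on $Q_n$ with constant $\pi^2$. This constant equals the Neumann spectral gap of $(0,1)$ and tensorizes. It yields Gaussian concentration $1-|A_r|\le e^{-\pi^2 r^2/2}$ up to lower-order factors. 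I would then invoke E.~Milman's concentration-to-isoperimetry equivalence for convex domains with uniform measure, in its sharp small-volume form. That form converts Gaussian concentration at rate $\pi^2$ into
\[
P_n(A)\ge(1-o(1))\,\pi|A|\sqrt{2\log(1/|A|)},
\]
uniformly in $n$. The delicate point is that the $o(1)$ loss in this conversion must be dimension-free and must not degrade the constant $\pi$.

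If that route loses a constant, the fallback is a direct argument in the style of Lemma~\ref{fiber}. I would slice along coordinates and run an induction in the small-volume regime, using the one-dimensional sharp estimate and the coarea bound \eqref{support-cheeger}. I expect this to be considerably more technical.
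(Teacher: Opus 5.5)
The genuine gap is the lower half of \eqref{small-volume-asymptotic}, which your proposal does not actually prove. A lower bound of the correct order is already free: \eqref{gauss-lower} gives $I_n(a)\ge J(a)=\sqrt{2/\pi}\,\pi\IG(a)$ for every $n$. So the whole content of $I_\infty(a)\ge(1-o(1))\pi\IG(a)$ is the exact constant $\pi$, and any argument that loses a numerical factor proves nothing new.

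This is exactly where your primary route fails. E.~Milman's concentration-to-isoperimetry implications for convex domains hold only up to unspecified universal numerical constants. No ``sharp small-volume form'' is available that turns the Herbst bound $e^{-\pi^2r^2/2}$ into $P_n(A)\ge(1-o(1))\pi|A|\sqrt{2\log(1/|A|)}$ uniformly in $n$. You flag this as ``the delicate point'', but it is the entire theorem, not a detail. Invoked with its actual constants, the route gives at most what \eqref{gauss-lower} already gives.

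Your fallback has the same defect. The bound \eqref{support-cheeger} is built on $J$, so it carries the factor $\sqrt{2\pi}$ rather than $\pi$. The one-dimensional profile $I_1\equiv1$ contains no trace of $\pi$ at all: the constant enters only at second order, through the Neumann gap $\pi^2$ of $(0,1)$.

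A slicing or tensorization argument in the spirit of \eqref{bobkov} would need a one-dimensional inequality $\mathcal I(\E f)\le\E\sqrt{\mathcal I(f)^2+|f'|^2}$ on $(0,1)$ with $\mathcal I(t)\sim\pi\IG(t)$ as $t\downarrow0$. Two constraints pin down what such a profile can be:
\begin{itemize}
\item Linearizing at constants forces $-\mathcal I\mathcal I''\le\pi^2$, which $\pi\IG$ saturates exactly.
\item Testing on approximate indicators of $(0,a)$ forces $\mathcal I\le1$, which $\pi\IG$ violates near $1/2$, since $\pi\IG(1/2)=\sqrt{\pi/2}>1$.
\end{itemize}
So a modified profile is required, and proving the inequality for non-infinitesimal $f$ is the real work. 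Your proposal does not identify or attempt this.

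Note that the paper gives no argument for this theorem; it imports both halves from \cite{Yu2025LargeDeviations,yu2026sharp}.

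What you do prove is correct. $\partial_iS_n=-\pi\sqrt{2/n}\,\sin(\pi x_i)$ never vanishes in the open cube, so the coarea identity holds at every level. Conditional Cauchy--Schwarz plus exchangeability then give $\limsup_nP_n\le\pi\IG(a)$, which establishes $I_\infty\le\pi\IG$.

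You do not need a two-dimensional local limit theorem. Let $\tilde\rho_{n-1}$ be the density of $\sqrt{2/n}\sum_{i\ge2}\cos(\pi U_i)$. Then $\rho_n(z)\,\E[\cos^2(\pi U_1)\mid S_n=z]=\E\bigl[\cos^2(\pi U_1)\,\tilde\rho_{n-1}(z-\sqrt{2/n}\cos\pi U_1)\bigr]$. The uniform one-dimensional local CLT suffices, since the arcsine law has a bounded density after three convolutions.

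For the literal claim that these sets attain $\pi\IG(a)$ in the limit, you also need $\liminf_nP_n\ge\pi\IG(a)$. This does not follow from $I_\infty$, which is smaller than $\pi\IG$ near $1/2$. It requires a conditional law of large numbers for $\frac1n\sum_i\cos^2(\pi U_i)$ on the level set, obtainable the same way.

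As it stands, your proposal proves the upper bound and the construction, but not the asymptotic equivalence $I_\infty(a)\sim\pi\IG(a)$.
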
 

This theorem is the cube specialization of \cite[Theorem 5]{Yu2025LargeDeviations}
and \cite[Theorems~1.2 and~1.5, Example~4]{yu2026sharp}. Theorem~\ref{small-volume}
establishes the leading small-volume behavior predicted by Conjecture~\ref{profile-conjecture}.
 The exact identity $I_{\infty}(a)=\pi\IG(a)$ for sufficiently small
positive $a$, predicted by the conjecture, is stronger than \eqref{small-volume-asymptotic}.

For $0<a<\tau_{\mathrm{G}}$, the upper bound $\pi\IG(a)$ is strictly
smaller than $B_{k}(a)$ for every $k\ge1$. Consequently, there exist
sets in sufficiently high-dimensional cubes whose perimeters are strictly
smaller than those of cylindrical orthant balls. This provides an
explicit confirmation of the phenomenon established in \cite[Theorem~1.6]{Milman2026},
namely, that cylindrical orthant balls need not be isoperimetric minimizers.

\appendix

\section{Analytic certificates for the constants}

\label{numeric} We verify \eqref{H-certificate} by rational inequalities,
without numerical inversion of the Gaussian distribution.

\subsection{Elementary bounds}

We first prove $25/8<\pi<22/7$. Polynomial division gives 
\[
\frac{x^{4}(1-x)^{4}}{1+x^{2}}=x^{6}-4x^{5}+5x^{4}-4x^{2}+4-\frac{4}{1+x^{2}}.
\]
Integrating on $[0,1]$, using $4\int^{1}_{0}(1+x^{2})^{-1}\,dx=\pi$,
yields 
\[
0<\int^{1}_{0}\frac{x^{4}(1-x)^{4}}{1+x^{2}}\,dx=\frac{22}{7}-\pi.
\]
For the lower bound, the tangent addition formula and the fact that
both angles lie in $(0,\pi/2)$ give $\arctan1=\arctan(1/2)+\arctan(1/3)$.
The alternating expansion $\arctan x=x-x^{3}/3+x^{5}/5-\cdots$ has
a positive remainder after an even number of terms. Thus 
\[
\pi>4\left(\frac{1}{2}-\frac{1}{24}+\frac{1}{160}-\frac{1}{896}+\frac{1}{3}-\frac{1}{81}\right)=\frac{284663}{90720}=\frac{25}{8}+\frac{1163}{90720}>\frac{25}{8}.
\]
Since $\kappa^{2}=2\pi$, these bounds imply $\kappa>5/2$; also 
\[
(251/100)^{2}-\frac{44}{7}=\frac{1007}{70000}>0,
\]
so $\kappa<251/100$.

For $x=98/25$, let $a_{j}=x^{j}/j!$. For every $j\ge9$, $a_{j+1}/a_{j}=x/(j+1)\le x/10<1$.
The tail from $a_{9}$ onward is therefore bounded by a geometric
series: 
\begin{equation}
e^{x}\le\sum^{8}_{j=0}\frac{x^{j}}{j!}+\frac{x^{9}}{9!}\frac{1}{1-x/10}=\frac{59212656221511566}{1174163818359375}<\frac{101}{2}.\label{exp-certificate}
\end{equation}
The difference between $101/2$ and the displayed fraction is positive:
\[
\frac{165233211273743}{2348327636718750}>0.
\]
Finally, $e^{1/2}>1+1/2=3/2$ gives $e^{-1/2}<2/3$.

\subsection{Mills bounds and the small-support estimate}

Let $\overline{\Phi}=1-\Phi$ and $M(t)=\overline{\Phi}(t)/\varphi(t)$,
$t>0$. Differentiation gives $M'=tM-1$. An equivalent integral representation
is $M(t)=\int^{\infty}_{0}e^{-tu-u^{2}/2}\,du\le1/t$. Define 
\[
L(t)=\frac{t(t^{2}+5)}{t^{4}+6t^{2}+3},\qquad U(t)=\frac{t^{2}+2}{t^{3}+3t}.
\]
Differentiating these rational functions and collecting terms gives
\[
L'-tL+1=\frac{24}{(t^{4}+6t^{2}+3)^{2}}>0,\qquad U'-tU+1=-\frac{6}{t^{2}(t^{2}+3)^{2}}<0.
\]
For either $A=L$ or $A=U$, write $r_{A}=A'-tA+1$ and $h_{A}=M-A$.
Then $h_{A}'-th_{A}=-r_{A}$, and multiplication by $e^{-t^{2}/2}$
followed by integration yields 
\[
M(t)-A(t)=e^{t^{2}/2}\int^{\infty}_{t}r_{A}(s)e^{-s^{2}/2}\,ds.
\]
The boundary term at infinity vanishes because $M$, $L$, and $U$
are $O(1/t)$ there. The signs of the residuals prove 
\begin{equation}
\frac{t(t^{2}+5)}{t^{4}+6t^{2}+3}<M(t)<\frac{t^{2}+2}{t^{3}+3t}.\label{mills-bounds}
\end{equation}
Both rational bounds are asymptotic to $1/t$, which also establishes
the Mills asymptotic used in Section~\ref{conjectures-section}.

At $t=14/5$, direct substitution gives $L(t)=22470/69691$. Using
$\kappa<251/100$ and \eqref{exp-certificate}, 
\begin{align*}
\overline{\Phi}(14/5) & =\kappa^{-1}e^{-98/25}M(14/5)\\
 & >\frac{100}{251}\frac{2}{101}\frac{22470}{69691}=\frac{4494000}{1766736541}>\frac{1}{400}.
\end{align*}
For the last comparison the positive difference is $30863459/706694616400$.
Since the Gaussian tail is strictly decreasing, $t_{q}=\Phi^{-1}(1-q)>14/5$
for $q=1/400$. By symmetry, $J(q)=\kappa\varphi(t_{q})$. The upper
Mills bound consequently gives 
\[
H(q)=\frac{\kappa}{M(t_{q})}>\kappa\frac{t_{q}(t^{2}_{q}+3)}{t^{2}_{q}+2}>\frac{5}{2}\frac{1897}{615}=\frac{1897}{246}.
\]
To justify replacing $t_{q}$ by $14/5$, note that $k(t)=t(t^{2}+3)/(t^{2}+2)=t+t/(t^{2}+2)$
and $k'(t)=1+(2-t^{2})/(t^{2}+2)^{2}>0$: the numerator after multiplication
by $(t^{2}+2)^{2}$ is $t^{4}+3t^{2}+6>0$. Also $k(14/5)=1897/615$.
For $h=9/20$ we obtain 
\[
C(h,q)>\frac{17073\sqrt{3}}{19680}-\frac{1}{2}>\frac{401}{400}.
\]
The last inequality is equivalent to $\sqrt{3}>49282/28455$. Both
sides are positive, and $3(28455)^{2}-(49282)^{2}=345551>0$, proving
the comparison. This completes the certificate \eqref{H-certificate}.

\subsection{Radial-shell checks}

Let $a_{0}=81/256$. For $x\in[25/8,22/7]$, 
\[
\frac{d}{dx}\frac{x^{2}}{2x-a_{0}}=\frac{2x(x-a_{0})}{(2x-a_{0})^{2}}>0.
\]
Therefore the upper bound on $\pi$ gives the lower bound 
\[
c_{*}>\frac{12}{25}-\frac{9}{224}-\frac{16(22/7)^{2}}{63(2(22/7)-81/256)}=\frac{73871593}{3773901600}.
\]
Subtracting $1/100$ gives $36132577/3773901600>0$, so $c_{*}>1/100$.
Both summands of $C=2/(2\pi-a_{0})+81/(256\pi^{2})$ decrease with
$\pi$. Hence the lower bound on $\pi$ yields 
\[
\frac{64C}{63}<\frac{64}{63}\left(\frac{2}{2(25/8)-81/256}+\frac{81}{256(25/8)^{2}}\right)=\frac{22448624}{59810625}<\frac{2}{5}.
\]
Here $2/5-22448624/59810625=1475626/59810625>0$. These exact rational
comparisons prove both shell constants.

\section*{Acknowledgements}

\emph{Generative-AI use disclosure:} 
OpenAI’s ChatGPT was used substantially in developing the proofs and drafting this manuscript. Proof strategies and arguments were generated and refined through repeated exchanges in which the author directed the investigation, critically examined proposed arguments, and requested corrections and further details. The author independently verified all mathematical arguments in the manuscript and assumes full responsibility for its correctness and content.

\emph{Funding:} This work was supported by the National Key Research
and Development Program of China under grant 2023YFA1009604 and the
NSFC under grant 62101286.

 \bibliographystyle{abbrv}
\bibliography{ref}

\end{document}